\numberwithin{equation}{section} 
\def\sideremark#1{\ifvmode\leavevmode\fi\vadjust{\vbox to0pt{\vss
 \hbox to 0pt{\hskip\hsize\hskip1em
 \vbox{\hsize3cm\tiny\raggedright\pretolerance10000
 \noindent #1\hfill}\hss}\vbox to8pt{\vfil}\vss}}}%
\title{The Frobenius theorem for $\mathbb{Z}^n_2$-supermanifolds}
\begin{document}


\author{Tiffany Covolo$^1$, Stephen Kwok $^2$,}
\author{Norbert Poncin $^2$}
\address{$^1$Faculty of Mathematics, National Research University Higher School of Economics, 7 Vavilova Str., 117312 Moscow, Russia}
\address{$^2$Mathematics Research Unit, University of Luxembourg, 6, rue Richard Coudenhove-Kalergi, L-1359, Luxembourg}
\email{tcovolo@hse.ru, stephen.kwok@uni.lu, norbert.poncin@uni.lu}

\maketitle

\begin{abstract}
We continue the development of $\mathbb{Z}^n_2$-supergeometry, a natural generalization of classical ($\mathbb{Z}_2$-graded) supergeometry, by proving the Frobenius theorem for integrable distributions on differentiable $\mathbb{Z}^n_2$-supermanifolds. Both the local and global versions of the theorem are addressed.
\end{abstract}

\theoremstyle{plain} \newtheorem{thm}{Theorem}[section]
\theoremstyle{plain} \newtheorem*{mythm}{Theorem}
\theoremstyle{plain} \newtheorem{lem}[thm]{Lemma}
\theoremstyle{plain} \newtheorem{prop}[thm]{Proposition}
\theoremstyle{plain} \newtheorem{cor}[thm]{Corollary}
\theoremstyle{definition} \newtheorem{defn}{Definition}

\newcommand{\R}{\mathbb{R}}
\newcommand{\C}{\mathbb{C}}
\newcommand{\N}{\mathbb{N}}
\newcommand{\Z}{\mathbb{Z}}
\newcommand{\Zs}{\Z_2} 		
\newcommand{\Zn}{\Zs^n} 

\newcommand{\cO}{{\mathcal{O}}} 
\newcommand{\cJ}{\mathcal{J}}
\newcommand{\cT}{\mathcal{T}}
\newcommand{\cA}{{\mathcal{A}}} 

\newcommand{\Ci}{{\mathcal{C}}^{\infty}} 
\newcommand{\Co}{{\cal C}^{0}}
\newcommand{\Om}{{\Omega}}

\newcommand{\scD}{\mathscr{D}}

\section{Introduction}

Recently, motivated by various physical applications including those of string theory \cite{AFT} and parastatistics \cite{YJ}, there has been work on the foundations of the theory of differentiable $\Zn$-supermanifolds. A Berezin-Leites style ringed space definition of $\Zn$-supermanifold was given in \cite{CGP1}, and a proof of the $\Zn$-graded analogue of the celebrated theorem of Batchelor \cite{Bat79} \cite{Bat80} and Gawedzki \cite{Gaw} in \cite{CGP2}. The basic results of differential calculus on $\Zn$-supermanifolds were proven in \cite{CKP}. The present paper continues this development of the foundations of $\Zn$-supergeometry.

The classical Frobenius theorem is a key tool in the classical theory of differentiable manifolds, allowing one to build a submanifold of a manifold from knowledge of its tangent bundle. It is therefore desirable to extend this theorem to $\Zn$-supermanifolds, The extension for $n=1$ is by now well-known; proofs may be found in e.g. \cite{DM}, \cite{Var}. The goal of the present paper is to formulate and prove the Frobenius theorem for differentiable $\Zn$-supermanifolds.

We now explain the structure of the paper. Section \ref{prelim} is dedicated to the preliminaries required, largely developing some basic linear algebra in the $\Zn$-graded context, as well as the generalization of Nakayama's lemma to this context. The crucial property of $\mathcal{J}$-adic Hausdorff completeness of the structure sheaf of a $\Zn$-supermanifold is required to ensure that this linear algebra is well-behaved. Section \ref{dist} is dedicated to a discussion of distributions on $\Zn$-supermanifolds. Here the $\mathcal{J}$-adic topology on the structure sheaf comes into play. In section \ref{locfrob}, we prove the local version of the Frobenius theorem. Here we have the novel phenomenon, not present in $n = 1$-supergeometry, of vector fields that are even but not of degree zero, and we analyze the local structure of such vector fields and show that there exists a local coordinate system in which they take a standard form. We then briefly discuss the concepts of embedded and immersed submanifold of a $\Zn$-supermanifold. In Section \ref{globfrob}, we prove the global version of the $\Zn$-super Frobenius theorem that states that a $\Zn$-supermanifold is foliated by the integral subsupermanifolds associated to a distribution.

The generalization of the results of the theory of Lie algebras, Lie groups and their representations to the $\Zn$-graded category is largely open; however, a discussion of $\Zn$-graded (``color") Lie algebras may be found in \cite{KS}. The $\Zn$-super Frobenius theorem opens the way to a development of the theory of $\Zn$-super Lie groups and their actions on $\Zn$-supermanifolds, in particular homogeneous $\Zn$-superspaces. It would be interesting to address these in future work.\\

\section{Acknowledgments}

S.K. was supported by the Luxembourgish FNR via AFR grant no. 7718798.

\section{Preliminaries}\label{prelim}

We will need the following basic criterion for the invertibility of a square matrix with entries in a commutative $\Zn$-superalgebra.\\

\begin{lem}\label{invertiblemodJ}

Let $R$ be a $\Zn$-supercommutative ring, $J$ the homogeneous ideal generated by the elements of nonzero degree. Suppose that $R$ is $J$-adically Hausdorff complete. Let $T$ be an $n \times n$ matrix with entries in $R$. Then $T$ is invertible if and only if it is invertible mod $J$.
\end{lem}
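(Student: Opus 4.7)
The plan is to reduce to inverting a matrix of the form $I + N$ with entries of $N$ in $J$, and then to invert this by a Neumann series argument that exploits the $J$-adic Hausdorff-completeness hypothesis. The forward direction is immediate, since reduction modulo $J$ gives a ring homomorphism $R \to R/J$ and hence $M_n(R) \to M_n(R/J)$, so any inverse of $T$ reduces to an inverse of $\bar T$.

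For the converse, suppose $\bar T$ has inverse $U \in M_n(R/J)$; note that $R/J$ is in fact an ordinary commutative ring, since every homogeneous element of nonzero degree is killed by the quotient and the remaining degree-zero part is commutative, so $U$ is automatically a two-sided inverse. Lift $U$ arbitrarily to some $S \in M_n(R)$. Then both $TS - I$ and $ST - I$ have all entries in $J$; write $TS = I + N$ and $ST = I + N'$. The core of the proof is then to invert any matrix of the form $I+N$ with entries of $N$ in $J$.

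For that, I would form the formal Neumann series $M := \sum_{k \geq 0} (-N)^k$. Each entry of $N^k$ is a sum of $k$-fold products of entries of $N$, hence lies in $J^k$, so the partial sums form a Cauchy sequence entrywise in the $J$-adic topology. By Hausdorff completeness, they converge to a well-defined $M \in M_n(R)$. The standard telescoping identity
\[
(I+N)\sum_{k=0}^{\ell} (-N)^k = I - (-N)^{\ell+1}
\]
together with continuity of matrix multiplication in the $J$-adic topology yields $(I+N)M = I$, and the analogous argument on the other side gives $M(I+N)=I$. Applying this to $TS = I + N$ produces a right inverse $SM$ of $T$, and applying it to $ST = I + N'$ produces a left inverse $M'S$; the standard identity $M'S = (M'S)(T \cdot SM) = ((M'S)T)(SM) = SM$ shows these coincide and furnish a two-sided inverse.

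The main obstacle is the convergence of the Neumann series, which is precisely where the $J$-adic Hausdorff completeness of $R$ is needed. Since every manipulation above uses only associativity and distributivity of matrix multiplication, and never commutes two entries past each other, no Koszul signs appear and nothing beyond the classical Neumann series argument needs to be re-examined for the $\Zn$-supercommutative setting.
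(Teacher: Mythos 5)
Your proof is correct and follows essentially the same route as the paper: reduce to inverting a matrix $I+N$ with entries of $N$ in $J$, and invert it via the Neumann series, using $J$-adic Hausdorff completeness to guarantee convergence. You are in fact slightly more careful than the paper, which writes only $ST = I+X$ and inverts $I+X$ (strictly speaking this yields only a \emph{left} inverse of $T$); you also handle $TS = I + N'$ and verify that the resulting one-sided inverses coincide, closing that small gap.
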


\begin{proof}
If $T$ is invertible, then clearly $\overline{T} := T \text{ (mod} \, J)$ is invertible. Conversely, suppose that $\overline{T}$ is invertible, i.e. there is some matrix $S$ such that $ST = I + X$, where $X$ has all entries in $J$. Hence it suffices to show that any matrix of the form $I + X$, where $X$ has all entries in $J$, is invertible.

The $\Zn$-graded associative ring $N := M(n, R)$ of all $n \times n$ matrices with entries in $R$ is an $R$-module by scalar multiplication. As $R$ is $J$-adically Hausdorff complete, the same is true of $N$. (This follows from the fact that $N$ is a free $R$-module of finite rank, see \cite{CKP}).

The matrix $I + X$ has a formal inverse given by the geometric series: $(I + X)^{-1} = \sum_{k=0}^\infty (-1)^kX^k$. As $X^k \in M(n, J^k)$, the partial sums $\sum_{k=0}^n (-1)^k X^k$ form a $J$-adically Cauchy sequence in $N$. By Hausdorff completeness of $N$, the geometric series converges to a unique limit in $N$, which is the inverse of $I + X$.
\end{proof}

\medskip

The following graded version of Nakayama's lemma will be a key tool in the sequel.\\

\begin{lem}[$\Zn$-graded Nakayama's lemma]\label{Nakayama}
Let $A$ be a $\Zn$-supercommutative local ring with maximal homogeneous ideal $\mathfrak{m}$, $E$ a finitely generated module for $A$, regarded as an {\it ungraded} ring. Let $J$ be the ideal generated by the elements of nonzero degree, and suppose $A$ is $J$-adically Hausdorff complete. Then:\\

\begin{enumerate}
\item If $\mathfrak{m}E = E$, then $E = 0$. More generally, if $H$ is a submodule of $E$ such that $E = \mathfrak{m}E + H$, then $E = H$.\\
\item Let $\{v_i\}_{1 \leq i \leq p}$ be a basis for the ungraded $k$-vector space $E/\mathfrak{m}E$, where $k := A/\mathfrak{m}$. Suppose $e_i \in E$ lie above $v_i$. Then the $e_i$ generate the $A$-module $E$. If $E$ is a graded module for $A$ and the $v_i$ homogeneous, then we may choose the $e_i$ to be homogeneous of the same parity as the $v_i$.\\
\item Suppose $E$ is a projective $A$-module, i.e. $E \oplus F = A^N$ for some $A$-module $F$. Then $E$ is free, and the $e_i$ of 2) form a basis of $E$.
\end{enumerate}
\end{lem}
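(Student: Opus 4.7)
The plan is to prove (1) by the determinant trick, with Lemma \ref{invertiblemodJ} as the bridge to classical commutative Nakayama, and then to deduce (2) and (3) as routine consequences.

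For (1), let $e_1,\ldots,e_N$ generate $E$ as an ungraded $A$-module. The hypothesis $\mathfrak{m}E=E$ yields relations $e_i=\sum_j a_{ij}e_j$ with $a_{ij}\in\mathfrak{m}$, i.e.\ $T\cdot(e_1,\ldots,e_N)^{t}=0$ for $T:=I-(a_{ij})\in M(N,A)$; it suffices to show that $T$ is invertible. By Lemma \ref{invertiblemodJ} this is equivalent to invertibility of $\bar{T}$ modulo $J$. Since $A$ is $\Zn$-supercommutative, $A/J$ is canonically identified with the \emph{ordinary} commutative ring $A_0/J_0$, where $A_0$ is the degree-zero part and $J_0=J\cap A_0$. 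I then plan to verify that $A_0/J_0$ is a local ring whose maximal ideal is the image of $\mathfrak{m}$: any $a\in A_0\setminus\mathfrak{m}_0$ generates a homogeneous ideal of $A$ which, by the uniqueness of $\mathfrak{m}$ as a maximal homogeneous ideal, must be all of $A$, so $a$ is a unit of $A$ whose inverse is forced to be of degree zero by supercommutativity. Granted this, $\det\bar{T}=1+x$ with $x$ in the maximal ideal of the commutative local ring $A_0/J_0$ is a unit by the usual argument, whence $\bar{T}$ and therefore $T$ are invertible, so $E=0$. The ``more generally'' statement follows at once by applying the result to the finitely generated quotient $E/H$.

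For (2), set $H:=\sum_i Ae_i\subseteq E$. For any $e\in E$, expanding $\bar{e}=\sum_i\lambda_iv_i$ in the basis $\{v_i\}$ of $E/\mathfrak{m}E$ and lifting each $\lambda_i$ to $c_i\in A$ gives $e-\sum_ic_ie_i\in\mathfrak{m}E$, so $E=H+\mathfrak{m}E$; part (1) then yields $E=H$. The homogeneity refinement is obtained by replacing any initial lift $\tilde{e}_i$ of a homogeneous $v_i$ of degree $\alpha_i$ by its degree-$\alpha_i$ component, noting that $\mathfrak{m}E$ is a graded submodule and hence that the degree-$\alpha_i$ part of $\tilde{e}_i$ still lifts $v_i$.

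For (3), let $\phi\colon A^p\twoheadrightarrow E$ be the surjection determined by the $e_i$, and set $K:=\ker\phi$. Projectivity of $E$ splits the sequence $0\to K\to A^p\to E\to0$, yielding $A^p\cong K\oplus E$. Reducing modulo $\mathfrak{m}$ we obtain $k^p\cong K/\mathfrak{m}K\oplus E/\mathfrak{m}E$, and the induced surjection $k^p\to E/\mathfrak{m}E$ carries the standard basis onto $\{v_i\}$ and is therefore an isomorphism, forcing $K/\mathfrak{m}K=0$. Since $K$ is finitely generated as a direct summand of $A^p$, part (1) applied to $K$ gives $K=0$, so the $e_i$ form a basis of $E$.

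The principal technical hurdle I anticipate lies in step (1): carefully justifying that $A/J$ is a classical commutative local ring whose maximal ideal is precisely the image of $\mathfrak{m}$. It is exactly here that supercommutativity and the hypothesis that $\mathfrak{m}$ is the unique maximal \emph{homogeneous} ideal enter substantively; everything else follows cleanly from Lemma \ref{invertiblemodJ} and textbook commutative-algebra manoeuvres.
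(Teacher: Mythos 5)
Your proof of parts (1) and (2) runs essentially along the same lines as the paper's. For (1) you identify $A/J$ with $A_0/J_0$ and argue that it is a commutative local ring whose maximal ideal is the image of $\mathfrak{m}$, so that $\det\bar{T}=1+x$ is a unit; the paper formulates the same reduction via the chain $A\to A/J\to A/\mathfrak{m}$ and a remark about matrices over commutative local rings. Both routes hinge on Lemma~\ref{invertiblemodJ} to lift invertibility from $A/J$ to $A$. Two small remarks: the observation that the inverse of a degree-zero unit is itself of degree zero is really a consequence of the grading alone, not of supercommutativity; and you should note (as the paper does explicitly, by applying Lemma~\ref{invertiblemodJ} to $1\times 1$ matrices) that $J\subseteq\mathfrak{m}$, which is what guarantees that $J_0$ is a proper ideal of the local ring $A_0$ and hence that $A_0/J_0$ really is local. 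Your homogeneity refinement in (2), taking the degree-$\alpha_i$ component of an arbitrary lift, is correct and slightly more explicit than the paper's treatment.

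For part (3) you take a genuinely different route. The paper works directly with the given decomposition $E\oplus F=A^N$, lifts a basis of $F/\mathfrak{m}F$, assembles the resulting generators of $A^N$ into a square matrix $X$, and shows $X$ is invertible by proving $XY=I$, reducing mod $J$, using commutativity of $A/J$ to get $Y_BX_B=I$, and then invoking Lemma~\ref{invertiblemodJ}. You instead present $E$ by the surjection $\phi:A^p\to E$ coming from the generators $e_i$, split off the kernel $K$ using projectivity, reduce mod $\mathfrak{m}$, and observe that $K/\mathfrak{m}K=0$ because $k^p\to E/\mathfrak{m}E$ sends the standard basis to the basis $\{v_i\}$; then part (1) applied to the finitely generated module $K$ gives $K=0$. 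Your argument is cleaner and closer to the textbook form of this step in classical Nakayama; it also makes transparent that the number of generators $p$ agrees with $\dim_k(E/\mathfrak{m}E)$, which the paper's matrix argument leaves implicit. Both are correct.
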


\medskip

\begin{proof}
The proof is similar to that of the corresponding lemma in \cite{Var}. We first remark that if $B$ is a {\it commutative} local ring with maximal ideal $\mathfrak{n}$, then a square matrix $R$ with coefficients in $B$ is invertible if and only if its reduction mod $\mathfrak{n}$ is; indeed, if this is so, $det(R) \notin \mathfrak{n}$, hence is a unit in $B$. Now we prove 1). Let $\{e_i\}$ generate $E$. Since $E = \mathfrak{m}E$, we may write $e_i = \sum_j m_{ij} e_j$ for some $m_{ij} \in \mathfrak{m}$. Letting $L$ be the matrix with entries $\delta_{ij} - m_{ij}$, we have:

\begin{align*}
L \begin{pmatrix}
e_1\\
e_2\\
\vdots \\
e_p
\end{pmatrix}= 0.
\end{align*}\\

To prove 1), it suffices to show $L$ has a left inverse. We will prove that $L$ is actually invertible. To see this, let $B = A/J$. The units of $A$ are precisely the elements of $A \backslash \mathfrak{m}$, so $J \subseteq \mathfrak{m}$ by Lem. 2.1 (applied to $1 \times 1$ matrices), hence we have homomorphisms:

\begin{align*}
A \to B = A/J \to k = A/\mathfrak{m}.
\end{align*}\

Let $L_B$ (resp. $L_k$) be the reduction of $L$ mod $J$ (resp. mod $\mathfrak{m}$). $B$ is a commutative local ring with maximal ideal $\mathfrak{m}/J$, and $L_k$ is the reduction of $L_B$ mod $(\mathfrak{m}/J)$. But $L_k$ is the identity, so the remark above implies $L_B$ is invertible. But by Lem. \ref{invertiblemodJ}, this implies $L$ is invertible. For the more general statement, suppose $E = H + \mathfrak{m}E$. Then $E/H = \mathfrak{m}(E/H)$, so $E/H = 0$ by what we proved. 

To prove part 2), we set $H$ equal to the submodule generated by the $e_i$. Then $E = \mathfrak{m}E + H$, whence $E = H$ by what we proved above.

In order to prove part 3), first note that $F$ is finitely generated, and that $k = A^N/\mathfrak{m}^N = E/\mathfrak{m}E \oplus F/\mathfrak{m}F$. Let $(w_j)$ be a basis of $F/\mathfrak{m}F$ and let $f_j$ be elements of $F$ lying over the $w_j$. By 2), the $e_i, f_j$ generate $A^N$, and the $e_i$ (resp. $f_j$) generate $E$ (resp. $F$). Let $X$ denote the $N \times N$ matrix whose columns are the coordinate vectors of $e_1, \dotsc, f_1, \dotsc$ in the standard basis of $A^N$. Then as the $e_i, f_j$ form a basis, we have $XY = I$ for some $N$ by $N$ matrix $Y$ with entries in $A$. Reducing mod $J$, we have $X_B Y_B = I$. But as $B$ is commutative, we have $Y_B X_B = I$ as well. By Prop. \ref{invertiblemodJ}, $X$ has a left inverse over $A$, which must be $Y$. Suppose there is a linear relation between the $e_i$ and $f_j$, and let $x$ be the column vector whose components are the coefficients of this relation. Then $Xx = 0$, but then $x = YXx = Xx = 0$. Hence $E$ is a free module with basis $e_i$.
\end{proof}

\medskip

\noindent {\bf Remark.} It would be interesting to see if it is possible to remove the assumption that the ring $A$ is $J$-adically Hausdorff complete in the statement of the graded Nakayama's lemma. This is not necessary for our purposes here, as we work exclusively with the local rings of germs of functions on a $\Zn$-supermanifold, which are indeed $\mathcal{J}$-adically Hausdorff complete. However, for other applications (e.g. the development of algebraic $\Zn$-supergeometry), one might potentially have to consider rings for which this hypothesis is not satisfied, and it would be desirable to remove it.

\bigskip

\section{Distributions on $\Zn$-supermanifolds}\label{dist}

\begin{defn}
Let $M$ be a $\Zn$-supermanifold. A {\it distribution} on $M$ is a graded subsheaf $\scD$ of the tangent sheaf $\mathcal{T}M$ which is locally a direct factor, i.e. for any point $m \in |M|$ there exists an open neighborhood $U \ni m$ and an $\mathcal{O}(U)$-module $\scD'$ such that $\mathcal{T}_nM = \scD_n \oplus \scD'_n$ for all $n \in U$. We consider $\scD$ as a sheaf of topological modules, endowed with the $\mathcal{J}$-adic topology from $\mathcal{O}$.\\
\end{defn}

The following will allow us to define the key concept of the {\it rank} of a distribution.\\

\begin{lem}\label{linindep}
Let $m \in M$ and let $\{X_i, \chi_\rho\}$ be vector fields defined in a neighborhood of $m$ such that their associated tangent vectors at $m$ are linearly independent in $T_mM$. Then their germs $\{[X_i]_m, [\chi_\rho]_m\}$ are $\mathcal{O}_m$-linearly independent in $[\mathcal{T}M]_m$. 
\end{lem}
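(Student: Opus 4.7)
The plan is to derive this from the graded Nakayama's lemma (Lemma \ref{Nakayama}). Set $E := [\mathcal{T}M]_m$. By the basic results of differential calculus on $\Zn$-supermanifolds established in \cite{CKP}, the tangent sheaf $\mathcal{T}M$ is locally free of rank equal to the total dimension of $M$, so $E$ is a free (in particular projective) $\mathcal{O}_m$-module of finite rank. The local ring $\mathcal{O}_m$ is $\Zn$-supercommutative with maximal homogeneous ideal $\mathfrak{m}$, has residue field $k = \R$, and is $\mathcal{J}$-adically Hausdorff complete, so the hypotheses of Lemma \ref{Nakayama} are satisfied.

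The crucial preliminary step is to identify $E/\mathfrak{m}E$ with $T_mM$ as (ungraded) $\R$-vector spaces via the evaluation map $\mathrm{ev} \colon E \to T_mM$, $Y \mapsto Y_m$. I would verify this in a coordinate chart around $m$: relative to a coordinate basis $\{\partial_\alpha\}$ of $E$ over $\mathcal{O}_m$, the evaluations $\{\partial_\alpha|_m\}$ form a basis of $T_mM$, and the kernel of $\mathrm{ev}$ consists precisely of those $\sum f_\alpha \partial_\alpha$ with $f_\alpha \in \mathfrak{m}$, i.e. $\ker\mathrm{ev} = \mathfrak{m}E$.

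Given this identification, the conclusion follows swiftly. Extend the linearly independent family $\{(X_i)_m, (\chi_\rho)_m\}$ in $T_mM$ to a basis $\{(X_i)_m, (\chi_\rho)_m, v_1, \ldots, v_s\}$, and choose lifts $\eta_a \in E$ of the $v_a$. Part (3) of the graded Nakayama's lemma, applied to the projective module $E$, asserts that the combined family $\{[X_i]_m, [\chi_\rho]_m, \eta_1, \ldots, \eta_s\}$ is an $\mathcal{O}_m$-basis of $E$; its subfamily $\{[X_i]_m, [\chi_\rho]_m\}$ is therefore $\mathcal{O}_m$-linearly independent, as required.

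The only genuinely non-formal step is the identification $E/\mathfrak{m}E \cong T_mM$, which is a routine unravelling of definitions once a coordinate chart is in place; everything else is a direct invocation of Lemma \ref{Nakayama}. One should take care that the basis extension in $T_mM$ need only be carried out at the level of ungraded $\R$-vector spaces, which matches the formulation of Lemma \ref{Nakayama}(2)--(3) and so causes no difficulty.
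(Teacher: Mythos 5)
Your argument is correct and is essentially the same as the paper's: both extend $\{(X_i)_m,(\chi_\rho)_m\}$ to a basis of $T_mM$, lift, and invoke the $\Zn$-graded Nakayama's lemma (part (3)) to conclude that the lifted germs form an $\mathcal{O}_m$-basis of $[\mathcal{T}M]_m$, hence the original germs are $\mathcal{O}_m$-linearly independent. The only difference is that you spell out the identification $E/\mathfrak{m}E \cong T_mM$, which the paper leaves implicit.
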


\begin{proof}
Choose vector fields $Y_1, \dotsc, Y_l$ such that the tangent vectors $\{(X_i)_m, (\chi_\rho)_m\} \cup \{(Y_1)_m, \dotsc (Y_l)_m\}$ form a basis of $T_mM$. By the $\Zn$-super Nakayama's lemma, the germs $\{[Y_1]_m, \dotsc, [Y_l]_m\} \cup \{[X_i]_m, [\chi_\rho]_m\}$ at $m$ form an $\mathcal{O}_m$-basis of $[\mathcal{T}M]_m$. In particular, $\{[X_i]_m, \dotsc, [\chi_\rho]_m\}$ are $\mathcal{O}_m$-linearly independent.
\end{proof}

\medskip

\begin{prop}\label{rank}
Let $\scD$ be a distribution. Then $\scD$ is a locally free sheaf of $\mathcal{O}_M$-modules. Furthermore, if $|M|$ is connected, $\text{dim}(\scD_m/\mathfrak{m}_m\scD_m)$ is independent of $m$.
\end{prop}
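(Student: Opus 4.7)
The plan is to establish freeness of each stalk $\scD_m$ first, then use the graded analogue of the openness of invertibility to pass from stalks to a local trivialization, and finally deduce the rank statement from local constancy combined with connectedness.

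I would fix a point $m_0 \in |M|$ and use the definition of a distribution to choose an open neighborhood $U$ of $m_0$ with a direct sum decomposition $\mathcal{T}M|_U = \scD|_U \oplus \scD'$ as sheaves of $\mathcal{O}_M|_U$-modules. Taking stalks at $m_0$ yields $[\mathcal{T}M]_{m_0} = \scD_{m_0} \oplus \scD'_{m_0}$. Because $\mathcal{T}M$ is locally free of finite rank, $[\mathcal{T}M]_{m_0}$ is finite-rank free over $\mathcal{O}_{m_0}$, so both $\scD_{m_0}$ and $\scD'_{m_0}$ are finitely generated projective $\mathcal{O}_{m_0}$-modules. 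Applying part (3) of Lemma \ref{Nakayama}, both summands are free; concretely, I lift a homogeneous basis of $\scD_{m_0}/\mathfrak{m}_{m_0}\scD_{m_0}$ to homogeneous germs $([X_i]_{m_0})$ forming an $\mathcal{O}_{m_0}$-basis of $\scD_{m_0}$, and similarly produce germs $([Y_j]_{m_0})$ spanning $\scD'_{m_0}$, so that together they form a basis of $[\mathcal{T}M]_{m_0}$.

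Next I would promote this to a local statement by shrinking $U$, representing the germs by sections $X_i \in \scD(U)$, $Y_j \in \scD'(U)$, and expressing the resulting tangent vectors $((X_i)_{m_0}, (Y_j)_{m_0})$ against a local coordinate frame of $\mathcal{T}M$. The corresponding square matrix has a classical (smooth) reduction whose determinant is nonzero at $m_0$, hence on some smaller neighborhood $V \subseteq U$. For each $m \in V$ the tangent vectors $((X_i)_m, (Y_j)_m)$ are therefore linearly independent in $T_mM$, and Lemma \ref{linindep} upgrades this to $\mathcal{O}_m$-linear independence of the germs $([X_i]_m, [Y_j]_m)$. Since the count matches the rank of $[\mathcal{T}M]_m$, these germs form an $\mathcal{O}_m$-basis. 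Any $v \in \scD_m$ then decomposes uniquely as $v = \sum f_i [X_i]_m + \sum g_j [Y_j]_m$; the direct sum $\scD_m \oplus \scD'_m$ forces all $g_j = 0$, so $([X_i]_m)$ is already an $\mathcal{O}_m$-basis of $\scD_m$. Thus $(X_i)$ trivializes $\scD$ over $V$.

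For the final assertion, this trivialization shows $\dim(\scD_m/\mathfrak{m}_m\scD_m)$ equals the fixed cardinality $|\{X_i\}|$ for every $m \in V$, so the function $m \mapsto \dim(\scD_m/\mathfrak{m}_m\scD_m)$ is locally constant on $|M|$, and connectedness forces it to be globally constant. I expect the main obstacle to be the passage from stalk freeness at $m_0$ to a trivialization on a neighborhood, since the abstract splitting $\scD \oplus \scD'$ need not respect any chosen basis at nearby stalks. The resolution is precisely the $\Zn$-graded openness criterion for bases, which combines the classical openness of determinant nonvanishing on $|M|$ with Lemma \ref{linindep} and Lemma \ref{invertiblemodJ} to propagate the stalk-level information onto an entire neighborhood.
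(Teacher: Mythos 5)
Your proposal is correct and follows essentially the same strategy as the paper: use the graded Nakayama lemma (part~3) at the stalk level to get a homogeneous $\mathcal{O}_{m_0}$-basis of $\scD_{m_0}$ together with a complementary basis of $\scD'_{m_0}$, propagate linear independence to a neighborhood via the classical openness of nonvanishing determinants (a step the paper leaves implicit) combined with Lemma~\ref{linindep}, and then exploit the direct-sum decomposition $\scD_m \oplus \scD'_m$ to see that the lifted sections $X_i$ already span $\scD_m$, giving a local trivialization and hence a locally constant rank. The only cosmetic difference is in how generation is extracted at the end — you decompose an arbitrary $v \in \scD_m$ in the full basis and use $\scD_m \cap \scD'_m = 0$ to kill the complementary coefficients, while the paper compares the two direct-sum decompositions of $[\mathcal{T}M]_n$ termwise — but these are the same argument.
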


\begin{proof}
Let $m \in M$. Then the super Nakayama's lemma (Lem. \ref{Nakayama}) implies that $\scD_m$ is a free $\mathcal{O}_m$-module, hence that $\scD(U)$ is a free $\mathcal{O}(U)$-module in a neighborhood $U$ of $m$. As $\scD$ is a distribution, there exists $D \subseteq \mathcal{T}M(U)$ such that $[\mathcal{T} M]_n = [\scD]_n \oplus [D]_n$ for all $n \in U$. 

Let $\{X_i, \chi_\rho\}$ be vector fields defined on $U$ such that their associated tangent vectors at $m$ are a homogeneous basis for $\scD_m/\mathfrak{m}\scD_m \subseteq T_mM$, and let $Y_j, \Theta_\sigma$ be vector fields defined on $U$ such that their associated tangent vectors at $m$ are a homogeneous basis for $\scD_m/\mathfrak{m}\scD_m$. The associated tangent vectors of $\{X_i, \chi_\rho, Y_j, \Theta_\sigma\}$ at $n$ form a homogeneous basis of $T_nM$ for any $n \in U$. By Lem \ref{linindep}, $\{X_i, \chi_\rho, Y_j, \Theta_\sigma\}$ form an $\mathcal{O}_n$-basis of $[\mathcal{T}M]_n$ for every $n$. To see that $\{X_i, \chi_\rho\}$ actually generate $\scD$, set $\scD'_n := \text{span}_{\mathcal{O}_n} \{[X_i]_n, [\chi_\rho]_n\} \subseteq [\scD]_n$, and $D'_n:= \text{span}_{\mathcal{O}_n} \{[Y_j]_n, [\Theta_\sigma]_n\}$. Then $[\scD']_n \subseteq [\scD]_n$ and $[D]'_n \subseteq [D]_n$, but as $[\scD']_n \oplus [D]'_n = [\mathcal{T}M]_n = [\mathcal{D}]_n \oplus [D]_n$, the generation is clear.

It follows from this that $\text{dim}(\scD_m/\mathfrak{m}\scD_m)$ is independent from $m \in M$, if $M$ is connected.
\end{proof}

\medskip

\begin{defn}
Let $\scD$ be a distribution on $M$. The {\it rank} of $\scD$ is the graded dimension 

\begin{align*}
\text{rk}(\scD) := \text{dim}(\scD_m/\mathfrak{m}_m\scD_m),
\end{align*}\

\noindent where $m$ is any point of $M$.
\end{defn}

Prop \ref{rank} shows that the rank is well-defined. For a distribution $\mathscr{D}$ on $M$ and a point $m \in |M|$, the real $\Zn$-super vector space $\mathscr{D}_m/\mathfrak{m}_m \mathscr{D}_m$ will often be denoted by $D_m$; geometrically, this is the fiber of the distribution at the point $m$. \\

Since the tangent sheaf is a sheaf of topological modules  in $\Zn$-supergeometry, the topological properties of distributions come into play. As is well-known from commutative algebra, if $R$ is a topological ring endowed with the $I$-adic topology defined by some ideal $I$, and $N$ a submodule of an $R$-module $M$, the $I$-adic topology on $N$ does not agree in general with the subspace topology induced on $N$ by the $I$-adic topology of $M$. However, in our case it is easy to see that the two coincide:

\begin{prop}
Let $\scD$ be a distribution. Then $\scD$, endowed with the $\mathcal{J}$-adic topology, is a topological subsheaf of $\mathcal{T}M$, i.e. the inclusion morphism of sheaves of $\mathcal{O}$-modules $\scD \hookrightarrow \mathcal{T}M$ is a topological embedding. Furthermore, $\scD$ is $\mathcal{J}$-adically Hausdorff complete.
\end{prop}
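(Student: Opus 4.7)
The plan is to reduce everything to a local computation. Both assertions are local on $|M|$, so I would fix an open set $U \subseteq |M|$ on which the distribution splits as a direct factor, i.e.\ $\mathcal{T}M|_U = \scD|_U \oplus \scD'|_U$ for some $\mathcal{O}|_U$-module $\scD'|_U$, which exists by definition of a distribution (shrinking $U$ as needed around any given point).

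For the first assertion (topological embedding), I would show the stronger equality $\mathcal{J}^k \mathcal{T}M|_U \cap \scD|_U = \mathcal{J}^k \scD|_U$ for every $k \geq 0$. The key observation is that multiplication by $\mathcal{J}^k$ respects direct sums, so the local decomposition gives
\begin{equation*}
\mathcal{J}^k \mathcal{T}M|_U \;=\; \mathcal{J}^k \scD|_U \oplus \mathcal{J}^k \scD'|_U.
\end{equation*}
Intersecting with $\scD|_U$ yields the desired equality, which is precisely the statement that the subspace topology on $\scD|_U$ induced from the $\mathcal{J}$-adic topology on $\mathcal{T}M|_U$ agrees with its own $\mathcal{J}$-adic topology. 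Since this holds on a cover of $|M|$, the inclusion $\scD \hookrightarrow \mathcal{T}M$ is a topological embedding.

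For Hausdorff completeness, I would invoke Proposition \ref{rank}, which tells us that $\scD$ is locally free of finite rank as an $\mathcal{O}_M$-module. The argument then parallels the one appearing in Lemma \ref{invertiblemodJ}: a free $\mathcal{O}$-module of finite rank inherits $\mathcal{J}$-adic Hausdorff completeness from the structure sheaf $\mathcal{O}$, as established in \cite{CKP}. This gives completeness of $\scD$ on each member of a suitable open cover, and hence of $\scD$ itself as a sheaf of topological modules.

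The step most likely to require care is the first one, where one must verify that the naive identity $\mathcal{J}^k(\scD \oplus \scD') = \mathcal{J}^k \scD \oplus \mathcal{J}^k \scD'$ really holds at the level of sections and germs; this is essentially automatic from the $\mathcal{O}$-linearity of the direct sum splitting, but it is where the hypothesis that $\scD$ be locally a direct factor (rather than merely a submodule) is decisive. Without this hypothesis one would face the usual Artin--Rees type difficulties distinguishing the subspace topology from the $\mathcal{J}$-adic topology on a submodule.
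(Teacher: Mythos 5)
Your proposal is correct and follows the same strategy as the paper's proof: use the local direct-sum decomposition to show $\mathcal{J}^k\mathcal{T}M \cap \scD = \mathcal{J}^k\scD$ (the paper works at the stalk level and then passes through the sheafification of the presheaf $U \mapsto \mathcal{J}^k(U)\cdot\mathcal{F}(U)$, but the idea is identical), and then deduce Hausdorff completeness from local freeness of finite rank together with completeness of $\mathcal{O}$.
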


\begin{proof}
Let $m$ be any point of $M$. As $\scD$ is a distribution, there exists an open subset $U \ni m$ and a submodule $\scD'$ of $\mathcal{T}M(U)$ such that $\mathcal{T}_mM = \scD_m \oplus \scD'_m$. This implies $\mathcal{J}^k_m \cdot \mathcal{T}_mM = \mathcal{J}^k_m \cdot \scD_m \oplus \mathcal{J}^k_m \cdot \scD'_m$. From this, it is easy to see that $(\mathcal{J}^k_m \cdot \mathcal{T}_mM) \cap \scD_m = \mathcal{J}^k_m \cdot \scD_m$. For an arbitrary sheaf of $\mathcal{O}$-modules $\mathcal{F}$, let $\mathcal{J}^k \cdot \mathcal{F}$ denote the sheafification of the presheaf $U \mapsto \mathcal{J}^k(U) \cdot \mathcal{F}(U)$. We have an obvious sheaf morphism $\mathcal{J}^k \cdot \scD \hookrightarrow (\mathcal{J}^k \cdot \mathcal{T}M) \cap \scD$ given by inclusion. Then the preceding discussion implies that $\mathcal{J}^k \cdot \scD = (\mathcal{J}^k \cdot \mathcal{T}M) \cap \scD$, which amounts to saying that the inclusion $\scD \hookrightarrow \mathcal{T}M$ is a topological embedding. That $\scD$ is $\mathcal{J}$-adically Hausdorff complete follows from the fact that it is a locally free sheaf of $\mathcal{O}$-modules and the Hausdorff completeness of $\mathcal{O}$.
\end{proof}

\medskip

We conclude this section by giving the appropriate generalizations of the notions of involutive and integrable distributions to $\Zn$-supergeometry. Recall that the $\Zn$-super Lie bracket on vector fields is:

\begin{align*}
[X, Y](f):= X(Yf) - (-1)^{\langle deg X, deg Y \rangle} Y(Xf).\\
\end{align*}

\begin{defn}
A distribution $\scD$ is {\it involutive} if and only if $\scD$ is closed under $\Zn$-graded Lie bracket, i.e. if $X, Y$ are vector fields in $\scD$ defined near $m$, $[X, Y]$ lies in $\scD$. 
\end{defn}

\medskip

\begin{defn}
A distribution $\scD$ is {\it integrable} if and only if for every point $m \in M$, there exists a coordinate system $(x, \xi)$ centered at $m$ such that $\scD_m$ is spanned by $\{\partial / \partial x^i, \partial / \partial \xi^j\}_{1 \leq i \leq r, j \in {\bf s}}$.
\end{defn}

\bigskip

\section{Local $\Zn$-super Frobenius theorem}\label{locfrob}

We have the following analogue of the classical theorem of Frobenius characterizing integrable distributions.\\

\begin{thm}[Local Frobenius theorem on $\Zn$-supermanifolds]\label{localFrob}
A distribution $\scD$ on a $\Zn$-supermanifold $M$ is integrable if and only if $\scD$ is involutive.
\end{thm}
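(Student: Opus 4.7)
The easy direction, integrable implies involutive, reduces to direct computation: if $\scD_m$ is $\cO_m$-generated by the coordinate vector fields $\partial/\partial x^i,\partial/\partial\xi^j$, these pairwise $\Zn$-super-commute, so expanding the graded bracket of arbitrary $\cO_m$-linear combinations via the graded Leibniz rule yields a result that stays inside $\scD_m$.

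For the converse, the plan is an induction on the rank of $\scD$. First, invoke the $\Zn$-graded Nakayama lemma together with Lem.~\ref{linindep} to lift a homogeneous basis of the fiber $D_m$ to a homogeneous local frame $\{X_i,\chi_\rho\}$ of $\scD$ near $m$, with the $X_i$ of degree $0$ and the $\chi_\rho$ of the remaining homogeneous degrees (both even-nonzero and odd). The heart of the argument is a single-vector-field straightening result: given $Z$ homogeneous of degree $\alpha\in\Zn$ and nonzero in $T_mM$, produce coordinates centered at $m$ in which $Z=\partial/\partial t$ for a coordinate $t$ of degree $\alpha$. For $\alpha=0$ this will follow from the classical rectification theorem on $|M|$, followed by $\cJ$-adic corrections in the nonzero-degree coordinates; for $\alpha$ odd it will be the supergeometric straightening familiar from the $n=1$ theory, which again uses the $\cJ$-adic Hausdorff completeness of $\cO$.

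The main obstacle, and the novel feature of the $\Zn$-graded setting, is the case $\alpha$ even but nonzero: such a $Z$ has no classical flow on $|M|$, so no ODE-based argument is directly available. My plan here is to fix an initial coordinate chart, apply Lem.~\ref{invertiblemodJ} after a linear change of variables so that $Z$ is brought to a normal form with an invertible leading coefficient along some $\partial/\partial\eta^j$ of degree $\alpha$, and then build the rectifying coordinate $t$ by a successive-approximation scheme adjusting one $\cJ$-adic order at a time, with convergence guaranteed by Hausdorff completeness. Granting the straightening in all three degree cases, the inductive step on the rank reads as follows: pick any $Z$ in the frame, straighten it to $\partial/\partial t$, and adjust the remaining frame elements to have no $\partial/\partial t$-component. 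Involutivity then forces the brackets $[\partial/\partial t,\tilde\chi]$, which are the $t$-derivatives of these adjusted fields and contain no $\partial/\partial t$-component, to remain within the reduced frame; hence the frame is preserved by the flow of $\partial/\partial t$ and descends to an involutive distribution of strictly smaller rank on the slice $\{t=0\}$. Applying the inductive hypothesis there and adjoining $t$ produces the adapted coordinates claimed.
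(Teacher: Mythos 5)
Your overall plan — straighten one homogeneous vector field at a time, splitting into degree~$0$, even-nonzero, and odd cases, using the classical rectification theorem plus $\mathcal{J}$-adic successive approximation — matches the paper's single-field straightening results (Lem.~\ref{degzerocan} and Prop.~\ref{nonzerodegcan}) quite closely, and the even-nonzero case is indeed the novel ingredient. But the inductive step you propose for assembling the adapted coordinate system has a genuine gap, and it is precisely the gap that the paper's Prop.~\ref{commutingvf} is designed to close.

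The paper's argument first replaces the given local frame of $\scD$ by a $\Zn$-\emph{supercommuting} frame, via a purely algebraic row reduction of the coefficient matrix $T$ to the form $(I \mid *)$; involutivity then forces all brackets among the reduced frame elements to vanish, because those brackets have no components along the leading coordinate directions. Once the frame supercommutes, straightening one field at a time is painless: after $X_i = \partial/\partial x^i$, the conditions $[\partial/\partial x^i, \chi_\sigma] = 0$ \emph{immediately} say that the coefficients of $\chi_\sigma$ are independent of $x^1, \dotsc, x^r$, so $\chi_\sigma$ genuinely lives on the slice and can be handled by Prop.~\ref{nonzerodegcan} there; it also yields $\chi_\sigma^2 = 0$ for odd $\chi_\sigma$, which the paper takes as an explicit hypothesis in Prop.~\ref{nonzerodegcan}. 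Your proposal skips this commuting-frame step. After straightening $Z = \partial/\partial t$ and killing the $\partial/\partial t$-components of the remaining $\tilde\chi_i$, involutivity only gives $[\partial/\partial t, \tilde\chi_i] = \sum_j c_{ij}\tilde\chi_j$ with nonzero $c_{ij}$ in general, so the $\tilde\chi_i$ do depend on $t$. The claim that ``the frame is preserved by the flow of $\partial/\partial t$'' and hence the straightened coordinates on $\{t=0\}$ lift to the full manifold then requires a linear ODE argument in $t$ (showing the transverse coefficients $\tilde\chi_i(y^j)$, $j>r$, vanish because they solve $\partial_t f = C f$ with $f|_{t=0}=0$), and for $t$ of nonzero degree there is no literal flow: you would need a formal $\mathcal{J}$-adic version of that ODE uniqueness statement, which your write-up does not supply and which is of comparable difficulty to the single-field straightening you did address. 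You also do not indicate where $\chi^2 = 0$ comes from in the odd case without first arranging a supercommuting frame. In short: the route through Prop.~\ref{commutingvf} is not optional decoration — it is what makes the subsequent induction trivial — and without it (or without explicitly carrying out the formal-ODE lift from the slice) your inductive step does not close.
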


That an integrable distribution is involutive is obvious, since coordinate vector fields always $\Zn$-supercommute. The converse will occupy the rest of the proof. We begin by proving a series of auxiliary results on the local structure of distributions.\\

\begin{prop}\label{commutingvf}
Let $\scD$ be an involutive distribution. Then $\scD$ has a local basis of supercommuting vector fields in a neighborhood of any point.
\end{prop}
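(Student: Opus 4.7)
The plan is induction on the total rank of the distribution. The base case, rank $(0,\dots,0,1,0,\dots,0)$, amounts to the existence of a local coordinate system in which a single non-vanishing homogeneous vector field becomes a coordinate derivation; this is the standard-form/straightening result for homogeneous vector fields on a $\Zn$-supermanifold that is established earlier in this section and which is the technical heart of the matter.

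For the inductive step, I would work near a point $m$, choose a homogeneous local basis of $\scD$, and single out one generator $Z$ of some degree $\alpha$. Apply the straightening result to pick coordinates $(x,\xi)$ in which $Z = \partial/\partial z$ for a coordinate $z$ of degree $\alpha$. Replace each remaining generator $W$ by
\begin{align*}
W' \;:=\; W \,-\, (-1)^{\langle \deg W,\, \alpha\rangle}\,(W z)\, Z,
\end{align*}
so that $W'$ has no $\partial/\partial z$-component and $\{Z,W'\}$ is still a homogeneous local basis of $\scD$. Involutivity gives $[Z,W']\in \scD$, and since $[Z,W']$ acts on coefficients by $\partial/\partial z$ it has no $\partial/\partial z$-component either. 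Expanding in the basis $\{Z,W'\}$ one gets a first-order linear system in $z$ for the coefficients of the $W'$. Solve this system and take as new basis the $\widetilde W$ whose coefficients are independent of $z$, determined by their values on the slice $\{z=0\}$; equivalently $[Z,\widetilde W]=0$. The $\widetilde W$ restrict to an involutive distribution of strictly smaller rank on the slice, to which the inductive hypothesis applies; extend the resulting commuting generators back to a neighborhood of $m$ by $z$-independence and adjoin $Z$.

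The main obstacle will be uniformity across degrees. For $\alpha = 0$ the coordinate $z$ is a classical real variable, the straightening of $Z$ comes from an honest flow, and the linear system for the $W'$ is solved by standard ODE theory on the reduced manifold. For $\alpha\neq 0$ --- in particular the nonzero even degrees novel to $\Zn$-supergeometry --- the coordinate $z$ is formal, there is no honest flow, and both the straightening and the solution of the linear system have to be carried out algebraically by expanding in powers of $z$ inside the $\mathcal{J}$-adically completed structure sheaf. Making this work is precisely what the $\mathcal{J}$-adic completeness lemma (Lem.~\ref{invertiblemodJ}) and the $\Zn$-graded Nakayama lemma (Lem.~\ref{Nakayama}) were set up to support; in fact the invertibility of the change-of-basis matrices that arise in this step is the direct analogue of the invertibility $\mod \mathcal{J}$ criterion. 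Threading the correct $\Zn$-Koszul signs through the bracket computations is also bookkeeping-heavy but routine.
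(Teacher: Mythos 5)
The proposal takes a genuinely different route from the paper, but it contains a circularity that you have not addressed.

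The paper's proof of Proposition~\ref{commutingvf} uses no straightening at all. It expresses a homogeneous basis of $\scD$ as the rows of a coefficient matrix $T$ in some coordinate frame, renumbers coordinates so that the left square block $T_0$ is an invertible degree-zero matrix, and left-multiplies by $T_0^{-1}$ to put the generators in the form $X_i = \partial/\partial x^i + (\text{terms involving only }\partial/\partial x^k, k>r,\text{ and }\partial/\partial\xi^\rho,\rho>s)$ and similarly for $\chi_\sigma$. The point is then that every bracket $[X_i,X_j]$, $[X_i,\chi_\sigma]$, $[\chi_\sigma,\chi_\tau]$ lies in the span of the ``outside-the-block'' coordinate derivations, while involutivity forces it to be an $\mathcal{O}$-linear combination of the $X_i,\chi_\sigma$, which have identity leading block. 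The only way both can hold is if the bracket is zero. This is pure linear algebra plus involutivity, and it comes \emph{before} the straightening results in the paper's logical order.

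Your induction relies on the straightening lemmas (Lem.~\ref{degzerocan}, Prop.~\ref{nonzerodegcan}) as input. But Prop.~\ref{nonzerodegcan} for an \emph{odd} vector field $\chi$ carries the extra hypothesis $\chi^2 = 0$, and that hypothesis is not automatic for a generator of an involutive distribution of rank $>1$. Indeed, involutivity only gives $\chi^2 = \tfrac12[\chi,\chi] \in \scD$, and since $\chi^2$ has degree zero it could a priori be a nonzero $\mathcal{O}$-combination of the degree-zero generators of $\scD$. In the paper, $\chi_\sigma^2=0$ is deduced in the proof of the main theorem \emph{from} the supercommutativity established by Prop.~\ref{commutingvf}; in fact, applied to $[\chi_\sigma,\chi_\sigma]$, the very linear-algebra argument sketched above is what proves $\chi_\sigma^2=0$. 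So if your proof of Prop.~\ref{commutingvf} appeals to the odd straightening result, and the odd straightening result needs $\chi^2=0$, you are assuming a consequence of what you are trying to prove. Your write-up flags the $\alpha\neq 0$ case as requiring formal/$\mathcal{J}$-adic arguments, but never addresses this more basic obstruction. (A secondary remark: even if the $\chi^2=0$ issue were repaired, you would effectively be proving the full local Frobenius theorem inside the induction, whereas the paper cleanly separates the much cheaper Prop.~\ref{commutingvf} from the subsequent straightening steps, and uses the former as input to the latter.)
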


\begin{proof}
Let $m$ be a point of $M$ and $(x, \theta)$ be a coordinate system near $m$ in some open set $U$. Let  $r|{\bf s}$ be the rank of $\scD$. Let $X_i$, $1 \leq i \leq r$ (resp. $\chi_\sigma$, $1 \leq \sigma \leq s$) be degree zero (resp. nonzero-degree) vector fields whose germs at $m$ form a basis of $\scD_m$. The coefficients of the $X_i$ and $\chi_\sigma$ in the basis $\partial/\partial x^j, \partial/\partial \theta^\rho$ form an $r|{\bf s} \times p|{\bf q}$ matrix $T$ having the form 
\begin{align*}
T = 
\begin{pmatrix}
a & \alpha\\
\beta & b
\end{pmatrix}
\end{align*}\

\noindent where $a$ (resp. $b$) is an $r \times p$ (resp. ${\bf s \times q}$) matrix of degree zero functions, and $\alpha$ (resp. $\beta$ ) is an $r \times {\bf q}$ (resp. (${\bf s} \times p$) matrix of nonzero degree functions.  The matrix $T$ must have rank $r|{\bf s}$ by the linear independence of the $X_i$ and $\chi_\sigma$. 

By renumbering coordinates (without regard for their degree), we may assume the left $r|{\bf s}$ square block submatrix $T_0$ of $T$ is an invertible degree zero matrix. These permutations may destroy the standard ordering of the columns of $T$ as well as the coarse $\mathbb{Z}_2$-grading of $T$ by parity; however, note such permutations only switch the columns of $T$ and hence preserve the degrees of the rows of $T$.

As $deg(T_0) = 0$, left multiplication of $T$ by $T_0$ preserves the degree of the rows of $T$. Therefore, we have that $T_0^{-1}T$ is a matrix of the form:

\begin{align*}
\begin{pmatrix}
I_r & 0 & *\\
0 & I_s& *
\end{pmatrix}.
\end{align*}\\

Since $T_0$ is invertible, the span of the rows of $T_0^{-1}T$ is the same as the span of the rows of $T$; the degrees of the rows of $T$ have also been preserved by all operations performed on $T$. We may therefore assume that for some (not necessarily standard) ordering of the $\theta^\sigma$s, the $X_i$ and $\chi_\sigma$ are of the form:

\begin{align}\label{commutatorform}
&X_i = \frac {\partial} {\partial x^i} + \sum_{j> r} a_{ji} \frac {\partial} {\partial x^j} + \sum_{\rho > s} \alpha_{j \rho} \frac {\partial} {\partial \theta^\rho}\\
&\chi_\sigma = \frac {\partial} {\partial \theta^\sigma} + \sum_{i > r} \beta_{ji} \frac {\partial} {\partial x^i} + \sum_{\rho > s} b_{j\rho}  \frac {\partial} {\partial \theta^\rho}
\end{align}\\

By the hypothesis that $\scD$ is involutive, the supercommutators $[X_i, X_j]$ lie in $\scD$, hence $[X_i, X_j] = \sum_{t \leq r} f_t X_t + \sum_{\tau \leq s} \varphi_\tau \chi_\tau$. However, one sees from (\ref{commutatorform}) that $[X_i, X_j]$ is an $\mathcal{O}(U)$-linear combination of the $\partial/\partial x^k$ for $k > r$ and the $\partial/\partial \xi^\rho$ for $\rho > s$. It follows that all $f_t, \varphi_\tau$ are zero, i.e. the $X_j$ all supercommute with each other. The cases of $[X_i, \chi_\sigma]$ and $[\chi_\rho, \chi_\sigma]$ are completely analogous.
\end{proof}

\bigskip

We will now show that given a degree $0$ local vector field which is non-degenerate at a point, there exists a local coordinate system in which the given vector field takes a canonical form. We will need the following simple lemma.

\medskip

\begin{lem}\label{adicconverge}
Let $U$ be an open subset of a $\Zn$-supermanifold, and suppose $\{f_k\}$ converges $\mathcal{J}$-adically in $\mathcal{O}(U)$. Then 

\begin{align*}
[\widetilde{\lim_{k \to \infty} f_k}](x) = \lim_{k \to \infty} \widetilde{f_k}(x).
\end{align*}
\end{lem}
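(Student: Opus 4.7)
The plan is to observe that the lemma reduces to a one-line consequence of the definition of the base map, once one recalls that the $\mathcal{J}$-adic topology is finer (much finer, in fact) than the topology of pointwise convergence of base functions. Recall that the base map $\widetilde{\cdot}: \mathcal{O}(U) \to \mathcal{C}^{\infty}(|U|)$ is, by definition, the surjection obtained from the quotient of $\mathcal{O}(U)$ by $\mathcal{J}(U)$; equivalently, at each point $x \in |U|$, $\widetilde{g}(x)$ is the class of $g$ in $\mathcal{O}_x / \mathfrak{m}_x \cong \mathbb{R}$. Thus $\widetilde{g} = 0$ as a function on $|U|$ if and only if $g \in \mathcal{J}(U)$.

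First I would set $f := \lim_{k \to \infty} f_k$ and apply the definition of $\mathcal{J}$-adic convergence at the coarsest filtration level $N = 1$: there exists $K$ such that $f_k - f \in \mathcal{J}(U)$ for every $k \geq K$. Note that this uses only the weakest consequence of $\mathcal{J}$-adic convergence; neither Hausdorffness nor completeness of $\mathcal{O}(U)$ is required, only the existence of a limit.

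Next I would apply the base map to obtain
\begin{align*}
\widetilde{f_k} - \widetilde{f} \;=\; \widetilde{f_k - f} \;=\; 0 \quad \text{in } \mathcal{C}^{\infty}(|U|), \qquad k \geq K,
\end{align*}
so that $\widetilde{f_k}(x) = \widetilde{f}(x)$ for every $x \in |U|$ and every $k \geq K$. The real-number sequence $\{\widetilde{f_k}(x)\}$ is therefore eventually constant equal to $\widetilde{f}(x)$, and hence its limit exists and equals $[\widetilde{\lim_{k \to \infty} f_k}](x)$, as claimed.

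I do not anticipate any genuine obstacle; the statement is essentially tautological. The only book-keeping point worth double-checking is that the two ways to compute $\widetilde{f}(x)$ — via the global quotient $\mathcal{O}(U)/\mathcal{J}(U)$ followed by evaluation at $x$, versus passage to the stalk $\mathcal{O}_x$ followed by reduction modulo $\mathfrak{m}_x$ — yield the same real number. This is built into the definition of a $\Zn$-supermanifold and the compatibility of the sheaves $\mathcal{O}$ and $\mathcal{J}$ with stalkwise restriction, so no further argument is needed.
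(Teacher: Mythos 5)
Your proposal is correct and is essentially the paper's own argument: the paper normalizes by assuming without loss of generality that $f_k \to 0$, which is exactly your step of passing to $f_k - f$, and then observes that membership in $\mathcal{J}$ for large $k$ forces the reductions to be eventually zero (in your version, eventually constant equal to $\widetilde{f}$). The observation that only the weakest level ($N=1$) of $\mathcal{J}$-adic convergence is needed, and that neither Hausdorffness nor completeness is used, is a nice clarifying remark but does not change the argument.
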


\medskip

\begin{proof} 
Without loss of generality, we may assume $f_k \to 0$ $\mathcal{J}$-adically in $\mathcal{O}(U)$. Then there is some $M$ such that $f_k \in \mathcal{J}$ for all $k \geq M$. Since the reduction of any function in $\mathcal{J}$ is zero, $0 = \widetilde{f}_k$ for all $k \geq M$, which implies the lemma.
\end{proof}

\bigskip

\begin{lem}\label{degzerocan}
Let $X$ be a degree zero vector field defined in a neighborhood of $m$ such that $X_m$ is a nonzero tangent vector. Then there is a coordinate system $(x, \xi)$ centered at $m$ such that $X = \partial / \partial x^1$.
\end{lem}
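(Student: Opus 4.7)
The plan is to construct the straightening coordinates in three stages: classical rectification of $\widetilde{X}$, lift to super coordinates, and a correction via solving a super initial value problem. Since $X$ is of degree zero and $X_m \neq 0$, the reduction $\widetilde{X}$ is a smooth classical vector field on $|M|$ with $\widetilde{X}_m \neq 0$. The classical rectification theorem supplies coordinates $(y^1, \dots, y^p)$ on a neighborhood of $m$ in $|M|$ with $\widetilde{X} = \partial/\partial y^1$. I would then pick any $\Zn$-super coordinate system $(x, \xi)$ centered at $m$ whose degree-zero parts reduce to the $y^i$. In these coordinates,
\begin{align*}
X = (1 + a)\,\partial/\partial x^1 + \sum_{i > 1} b_i\,\partial/\partial x^i + \sum_\rho c_\rho\,\partial/\partial \xi^\rho,
\end{align*}
where $a, b_i$ are degree zero and lie in $\mathcal{J}^2$ (every degree-zero element of $\mathcal{J}$ is in $\mathcal{J}^2$), and each $c_\rho$ has nonzero degree $\deg \xi^\rho$ and lies in $\mathcal{J}$.

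Next, I would define new coordinates $(\hat x, \hat \xi)$ as the unique homogeneous solutions to the super initial value problems
\begin{align*}
X \hat x^1 = 1, \quad X \hat x^i = 0 \;(i > 1), \quad X \hat \xi^\rho = 0,
\end{align*}
with data on the transverse slice $\Sigma := \{x^1 = 0\}$ given by $\hat x^1|_\Sigma = 0$, $\hat x^i|_\Sigma = x^i$, $\hat \xi^\rho|_\Sigma = \xi^\rho$. The technical heart of the proof is then establishing that the general super initial value problem $X f = g$, $f|_\Sigma = h$, admits a unique solution $f \in \mathcal{O}$ in a neighborhood of $m$. I would do this by expanding $f = \sum_I f_I(x)\,\xi^I$ as a formal series in the nonzero-degree coordinates with smooth coefficients, and solving inductively in the polynomial degree $k = |I|$. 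At each fixed $k$, the contributions from the $a$- and $b_i$-terms (lying in $\mathcal{J}^2$) and from the quadratic-and-higher-in-$\xi$ parts of the $c_\rho$ involve only already-solved $f_J$ with $|J| < k$, while the remaining equations for the $(f_I)_{|I|=k}$ form a coupled linear first-order ODE system in $x^1$ (with $x^2, \dots, x^p$ as parameters), coupled only through the linear-in-$\xi$ part of the $c_\rho$. Classical smooth ODE theory then produces unique smooth coefficients $f_I$ on a common neighborhood of $m$, and the $\mathcal{J}$-adic Hausdorff completeness of $\mathcal{O}$ ensures that the resulting formal series defines a genuine element of $\mathcal{O}$ of the expected $\Zn$-degree.

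Finally, I would verify that $(\hat x, \hat \xi)$ is a valid coordinate system and conclude $X = \partial/\partial \hat x^1$. By Lemma \ref{invertiblemodJ} it suffices to check invertibility of the change-of-coordinates Jacobian modulo $\mathcal{J}$. Modulo $\mathcal{J}$, it is block-diagonal: the degree-zero block equals the identity because $\widetilde{\hat x^i} = y^i$ by uniqueness of the classical solutions, and the nonzero-degree block at $m$ is also the identity because $\hat \xi^\rho|_\Sigma = \xi^\rho$ and $m \in \Sigma$. The equality $X = \partial/\partial \hat x^1$ then follows directly from the defining equations $X \hat x^1 = 1$, $X \hat x^i = 0$, $X \hat \xi^\rho = 0$. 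The main obstacle is the super initial value problem; the essential insight that makes it solvable is that $a$ and $b_i$ lie in $\mathcal{J}^2$, so that the only interaction within a fixed polynomial degree in $\xi$ comes from the linear-in-$\xi$ piece of the $c_\rho\,\partial/\partial \xi^\rho$ and yields an ordinary linear ODE system in $x^1$ at each step.
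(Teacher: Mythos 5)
Your proof is correct, but it follows a genuinely different route from the one in the paper. You straighten $X$ in one shot by solving the transport (initial value) problem $X\hat{x}^1=1$, $X\hat{x}^i=0$, $X\hat{\xi}^\rho=0$ with data prescribed on the transversal slice $\{x^1=0\}$, and you obtain the solution by expanding in the formal coordinates $\xi$ and inducting on polynomial degree; the key observation that $a,b_i\in\mathcal{J}^2$ (so that only the $\xi$-linear part of the $c_\rho\,\partial/\partial\xi^\rho$ term interacts at fixed degree, yielding a coupled but genuinely linear ODE system in $x^1$ at each stage) is exactly what makes this inductive solve go through, and $\mathcal{J}$-adic Hausdorff completeness then sums the formal series to an honest section of $\mathcal{O}$. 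The paper instead proceeds by an infinite sequence of coordinate corrections: a first linear change $\eta=g(z)\zeta$ (with $g$ solving a matrix ODE $\partial g/\partial z^1=-gb$) gets $X\equiv\partial/\partial y^1\pmod{\mathcal{J}^2}$, and then for each $k\geq 2$ an additive translation $y_{k+1}=y_k+a^k$, $\eta_{k+1}=\eta_k+\beta^k$ (with $a^k,\beta^k$ obtained by antidifferentiation in $y^1_k$) improves the congruence to $\pmod{\mathcal{J}^{k+1}}$; the sequence of coordinate systems is shown to be $\mathcal{J}$-adically Cauchy and its limit is the desired chart. Both approaches solve ODEs in the flow direction and both hinge on Hausdorff completeness, but they use it in different places: you use it to sum a formal $\xi$-series into a single function, whereas the paper uses it to pass to the limit of an infinite tower of coordinate changes. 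Your version is arguably more direct (one global transport problem rather than an iteration, with the small price of dealing with a coupled rather than decoupled ODE system at each $\xi$-degree) and is the natural $\Zn$-analogue of the classical ``flow-box'' argument; the paper's version is closer in spirit to the usual $n=1$ super proof, which terminates after finitely many steps because $\mathcal{J}$ is nilpotent there, and extends it by replacing termination with $\mathcal{J}$-adic convergence.
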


\begin{proof}
As the result is purely local, we may assume $M$ is some open subset $U$ of $\mathbb{R}^{p|{\bf q}}$, with coordinates $(z, \zeta)$ centered at $m = 0$. We note that this implies $\mathcal{T}M(U)$ is a free $\mathcal{O}(U)$-module. Since $X$ has degree $0$, it leaves the ideal $\mathcal{J}$ invariant and hence induces a vector field on the reduced manifold $|U|$ whose induced tangent vector at $0$ is nonzero. By the classical Frobenius theorem applied to $|U|$, we may assume $X \equiv \partial/ \partial z^1 \text{ (mod } \mathcal{J})$. Thus

\begin{align*}
X = \frac {\partial} {\partial z^1} + \sum_j a_j \, \frac {\partial} {\partial z^j} + \sum_\rho \beta_\rho \, \frac {\partial} {\partial \zeta^\rho}.
\end{align*}\\

\noindent where the $a_j$ and $\beta_\rho$ are in $\mathcal{J}$. The $a_j$ have degree zero, hence must lie in $\mathcal{J}^2$, and there exists a degree zero matrix of functions $(b_{\rho \tau})$ such that $\beta_\rho \equiv \sum_{\rho, \tau} b_{\rho \tau} \zeta^\tau \text{ (mod } \mathcal{J}^2)$. Hence we have

\begin{align*}
X \equiv \frac {\partial} {\partial z^1} + \sum_{\rho, \tau} b_{\rho \tau} \zeta^\rho \frac {\partial} {\partial \zeta^\tau} \hspace{3mm} \text{(mod} \mathcal{J}^2).
\end{align*}\\

We now make a transformation $U \to U$, given by $(z, \zeta) \to (y, \eta)$, where

\begin{align*}
&y = z, \hspace{8mm} \eta = g(z) \zeta,
\end{align*}\

\noindent with $g(z) = (g_{\rho \tau}(z))$ an invertible matrix of degree zero functions to be chosen suitably later. A calculation with the chain rule shows that

\begin{align*}
X \equiv \frac {\partial} {\partial y^1} + \sum_\rho \gamma_\rho \frac {\partial} {\partial \eta^\rho} \text{ (mod } \mathcal{J}^2),
\end{align*}\\

\noindent where $\gamma_\rho = \sum_\tau \zeta^\tau \bigg( \frac {\partial g_{\rho \tau}} {\partial z^1} + \sum_\sigma g_{\rho \sigma} b_{\sigma \tau}\bigg)$. 

Since antiderivatives exist locally, we may choose $g$ to satisfy the matrix differential equation

\begin{align*}
&\frac {\partial g} {\partial z^1} = -gb\\
&g(0) = I.
\end{align*}\\

\noindent Then with such a choice of $g$, $g$ is invertible near $0$, $(y, \eta)$ is a coordinate system near $0$ by the $\Zn$-super inverse function theorem, and 

\begin{align*}
X \equiv \frac {\partial} {\partial y^1} \text{ (mod } \mathcal{J}^2).
\end{align*}\\

So we may as well assume that $X \equiv \partial/\partial y^1 \text{ (mod } \mathcal{J}^2)$ from the beginning. Now suppose that $k \geq 2$, and $X \equiv \partial/\partial y^1_k \text{ (mod } \mathcal{J}^k)$ in some coordinate system $(y_k, \eta_k)$. We will show that if we choose a new coordinate system $(y_{k+1}, \eta_{k+1})$ given by 

\begin{align*}
(y_k, \eta_k) \to (y_{k+1}, \eta_{k+1}), \hspace{3mm} y^i_{k+1} = y^i_k + a^k_i, \eta^\rho_{k+1} = \eta^\rho_k + \beta^k_\rho
\end{align*} \\

\noindent with $a^k_i, \beta^k_\rho$ suitably chosen, then $X \equiv \partial/\partial y^1_{k+1} \text{ (mod } \mathcal{J}^{k+1})$. We have:

\begin{align*}
X = \frac {\partial} {\partial y^1_k} + \sum_j g^k_j \frac {\partial} {\partial y^j_k} + \sum_\rho \gamma^k_\rho \frac {\partial} {\partial \eta^j_k},
\end{align*}\\

\noindent where $g^k_j, \gamma^k_\rho \in \mathcal{J}^k$. Assuming for the moment that $(y_{k+1}, \eta_{k+1})$ is a coordinate system near $0$ (this will be justified later), the chain rule gives

\begin{align*}
\frac {\partial} {\partial y^j_k} = \frac {\partial} {\partial y^j_{k+1}} + \sum_l \bigg(\frac {\partial a^k_l} {\partial y^j_k} \bigg) \frac {\partial} {\partial y^l_{k+1}} + \sum_\tau \bigg(\frac {\partial \beta^k_\tau} {\partial y^j_k} \bigg) \frac {\partial} {\partial \eta^\tau_{k+1}}
\end{align*}\\

\noindent and

\begin{align*}
\frac {\partial} {\partial \eta^\rho_k} = \frac {\partial} {\partial \eta^\rho_{k+1}} + \sum_l \bigg(\frac {\partial a^k_l} {\partial \eta^\rho_k} \bigg) \frac {\partial} {\partial y^l_{k+1}} + \sum_\tau \bigg(\frac {\partial \beta^k_\tau} {\partial \eta^\rho_k} \bigg) \frac {\partial} {\partial \eta^\tau_{k+1}}.
\end{align*}\\

\noindent As $2k - 1 \geq k + 1$, we have the following expression for $X$:

\begin{align*}
X = \frac {\partial} {\partial y^1_{k+1}} + \sum_j \bigg(g^k_j +  \frac {\partial a_j^k} {\partial y^1_k} \bigg) \frac {\partial} {\partial y^j_{k+1}} + \sum_\rho \bigg(\gamma^k_\rho + \frac {\partial \beta^k_\rho} {\partial y^1_k} \bigg) \frac {\partial} {\partial \eta^\rho_{k+1}} + Z,
\end{align*}\\

\noindent where $Z \equiv 0 \, (\text{mod } \mathcal{J}^{k+1})$. If we choose $a_j^k$, $\beta^k_\rho$ such that 

\begin{align*}
&\frac {\partial a_j^k} {\partial y^1_k} = -g^k_j \\
&\frac {\partial \beta^k_\rho} {\partial y^1_k} = -\gamma^k_\rho,
\end{align*}\\

\noindent then $X \equiv \partial/\partial y^1_{k+1} \, (\text{mod } \mathcal{J}^{k+1})$; such a choice is possible, since antiderivatives exist locally. We note that $a_j^k, \beta^k_\rho$ so chosen lie in $\mathcal{J}^k$. This implies the Jacobian matrix of the transformation $(y_k, \eta_k) \mapsto (y_{k+1}, \eta_{k+1})$ at $0$ is the identity, hence $(y_{k+1}, \eta_{k+1})$ is indeed a coordinate system by the inverse function theorem.

Unlike the ordinary super case, $\mathcal{J}$ is not nilpotent, hence this process does not terminate. We obtain instead an infinite sequence of coordinate systems $(y_k, \eta_k)$ such that $X \equiv \partial/\partial y^1_k \, (\text{mod } \mathcal{J}^k)$. We must show that $(y_k, \eta_k)$ converges $\mathcal{J}$-adically to a unique limit $(x, \xi)$, that $(x, \xi)$ is a coordinate system near $0$, and that $X = \partial/\partial x^1$. For any $k$, $y^i_{k+1} - y^i_k$ and $\eta^\rho_{k+1} - \eta^\rho_k$ lie in $\mathcal{J}^k$ by the way in which we have defined them, hence $\{y^i_k\}$ and $\{\eta^\rho_k\}$ are $\mathcal{J}$-adically Cauchy sequences. By Hausdorff completeness of $\mathcal{O}(U)$ \cite{CGP1}, they converge to unique limits $x^i, \xi^\rho$ respectively. 

As the Jacobian matrix of the morphism $(y_2, \eta_2) \to (y_k, \eta_k)$ at $0$ is the identity for each $k$ by construction of $y_k, \eta_k$, the $\mathcal{J}$-adic continuity of vector fields and Lemma \ref{adicconverge} together imply that the Jacobian matrix of the morphism $(y_2, \eta_2) \to (x, \xi)$ at $0$ is also the identity, hence $(x, \xi)$ is a coordinate system near $0$ by the inverse function theorem.

Finally, let $N$ be any positive integer. There exists some $M$ such that $h^k_l := y^l_k - x^l, \omega^k_\tau := \eta_k^\tau - \xi^\tau \in \mathcal{J}^N$ for all $k \geq M$. By the chain rule, we have 

\begin{align*}
\frac {\partial} {\partial y^1_k} = \frac {\partial} {\partial x^1} + \sum_l \bigg(\frac {\partial h^k_l} {\partial y^1_k}\bigg) \frac {\partial} {\partial x^l} + \sum_\tau \bigg(\frac {\partial \omega^k_\tau} {\partial y^1_k}\bigg) \frac {\partial} {\partial \xi^\tau},
\end{align*}\\

\noindent which implies $\partial/\partial y^1_k \equiv \partial/\partial x^1 \, (\text{mod} \, \mathcal{J}^N)$ for all $k \geq M$. Let $N'$ be such that $N' \geq \text{max}(M, N)$. Then $X \equiv \partial/\partial y^1_{N'} \, (\text{mod} \, \mathcal{J}^{N'})$ by the definition of the coordinate systems $(y_k, \eta_k)$ and $\partial/\partial y^1_{N'} \equiv \partial/\partial x^1 \, (\text{mod} \, \mathcal{J}^N)$ from the previous remark. Hence $X \equiv \partial/\partial x^1 \, (\text{mod} \, \mathcal{J}^N)$ since $\mathcal{J}^{N'} \subseteq \mathcal{J}^N$. But $N$ was arbitrary, hence $X = \partial/\partial x^1$ since $\mathcal{T}M(U)$, being a free $\mathcal{O}(U)$-module, is $\mathcal{J}(U)$-adically Hausdorff.
\end{proof}

\medskip

\begin{lem}\label{degzerouptri}
Let $\scD$ be a rank $r|0$ distribution locally generated by a set $\{X_1, \dotsc, X_r\}$ of degree zero $\Zn$-supercommuting vector fields. Then there exists a coordinate system $(x, \xi)$ such that for each $j$, $1 \leq j \leq r$,

\begin{align*}
X_j = \frac {\partial} {\partial x^j} + \sum_{i = 1}^{j-1} a_{ij} \frac {\partial} {\partial x^i}
\end{align*}\

\noindent where the $a_{ij}$ are degree zero functions.
\end{lem}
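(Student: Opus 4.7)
The plan is to prove the lemma by induction on $r$. The base case $r=1$ is immediate from Lemma \ref{degzerocan}: choose coordinates $(x,\xi)$ centered at $m$ in which $X_1 = \partial/\partial x^1$, and the empty sum handles the triangular condition.

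For the inductive step, assume the claim for $r-1$ supercommuting vector fields. Given $X_1,\dots,X_r$, I would first apply Lemma \ref{degzerocan} to $X_1$ to obtain coordinates $(y,\eta)$ with $X_1 = \partial/\partial y^1$. Writing $X_j = f_j^1\,\partial/\partial y^1 + \sum_{i\geq 2} f_j^i\,\partial/\partial y^i + \sum_\rho g_j^\rho\,\partial/\partial\eta^\rho$ for $j\geq 2$, the hypothesis $[X_1,X_j]=0$ forces every coefficient $f_j^i$ and $g_j^\rho$ to be independent of $y^1$. Set
\begin{equation*}
\tilde X_j := X_j - f_j^1\,\frac{\partial}{\partial y^1}, \qquad j=2,\dots,r,
\end{equation*}
so that $\tilde X_j$ has no $\partial/\partial y^1$ component and $y^1$-independent coefficients.

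Next I would pass to the sub-$\Zn$-supermanifold $N$ cut out by $y^1=0$, with induced coordinates $(y^2,\dots,y^p,\eta^1,\dots)$. A direct computation, using that the $f_j^i$ and $g_j^\rho$ do not depend on $y^1$, gives $[\tilde X_i,\tilde X_j] = \bigl(X_j(f_i^1) - X_i(f_j^1)\bigr)\,\partial/\partial y^1$, which is a section of the normal line to $N$; hence the restricted vector fields $\tilde X_j{}^N$ are degree zero and genuinely supercommute on $N$. Their values at $m$ remain linearly independent in $T_mN$ (since the $(X_i)_m$ were independent and we only subtracted multiples of $(X_1)_m$, which lies transverse to $T_mN$). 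The inductive hypothesis applied to $\tilde X_2{}^N,\dots,\tilde X_r{}^N$ furnishes coordinates $(x^2,\dots,x^p,\xi^1,\dots)$ on $N$ centered at $m$ in which
\begin{equation*}
\tilde X_j{}^N = \frac{\partial}{\partial x^j} + \sum_{i=2}^{j-1} a_{ij}\,\frac{\partial}{\partial x^i}, \qquad j=2,\dots,r.
\end{equation*}

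To conclude, pull these $x^i$ and $\xi^\rho$ back to a neighborhood of $m$ in $M$ via the retraction $(y^1,y^2,\dots)\mapsto(0,y^2,\dots)$, so that they become functions independent of $y^1$; together with $x^1 := y^1$ they form a coordinate system by the $\Zn$-super inverse function theorem. Since the $\tilde X_j$ have $y^1$-independent coefficients, they act identically in the $x,\xi$-coordinates as their restrictions $\tilde X_j{}^N$, and one obtains
\begin{equation*}
X_j = f_j^1\,\frac{\partial}{\partial x^1} + \tilde X_j = \frac{\partial}{\partial x^j} + \sum_{i=1}^{j-1} a_{ij}\,\frac{\partial}{\partial x^i}, \qquad a_{1j}:=f_j^1,
\end{equation*}
which is the desired form. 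The main technical subtlety is the passage to the slice $N$: one must verify that the commuting-mod-$\partial/\partial y^1$ phenomenon does give bona fide supercommuting vector fields on the sub-supermanifold, and that the coordinate change on $N$ can be lifted back to an honest coordinate change on $M$ by exploiting the $y^1$-independence of the relevant coefficients. Once this is handled, the rest is bookkeeping with the chain rule.
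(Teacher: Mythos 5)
Your proof is correct and rests on the same two pillars as the paper's --- Lemma \ref{degzerocan} and induction on $r$ --- but flips the inductive decomposition. The paper applies its inductive hypothesis to $X_1,\dots,X_{r-1}$ first, obtaining the upper-triangular form for those, and then uses the resulting constraints on the coefficients of $X_r$ (they depend only on $x^r,\dots,x^p,\xi$) to normalize the ``tail'' of $X_r$ via Lemma \ref{degzerocan}, changing only the coordinates $x^r,\dots,x^p,\xi$; no explicit sub-supermanifold appears. You instead normalize $X_1 = \partial/\partial y^1$ first, peel off the $\partial/\partial y^1$-components from the remaining fields, observe that the $\tilde X_j$ (having $y^1$-independent coefficients and no $\partial/\partial y^1$-component) descend to supercommuting degree-zero vector fields on the slice $N = \{y^1 = 0\}$, and apply the inductive hypothesis there; the final coordinate system is assembled by pulling back via the retraction together with $x^1 := y^1$. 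Both arguments hinge on the same commutator observation (commuting with $\partial/\partial x^i$ forces $x^i$-independence of the relevant coefficients) and both verify the same chain-rule bookkeeping. Your version makes the sub-supermanifold explicit, which is perhaps more conceptual; the paper's is more economical in avoiding the slicing machinery and the retraction. One small remark: the coefficient $X_j(f_i^1) - X_i(f_j^1)$ in your expression for $[\tilde X_i, \tilde X_j]$ in fact vanishes identically on all of $M$, not merely upon restriction to $N$, since it is (up to sign) the $\partial/\partial y^1$-coefficient of $[X_i, X_j] = 0$; so the passage to $N$ to ``kill the normal component'' is a slight detour, though perfectly sound.
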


\begin{proof}
Since the statement is local, we may work from the beginning in a coordinate chart $U$. The proof is by induction on $r$. The case $r = 1$ is Lem. \ref{degzerocan}. Suppose there exist coordinates $(x, \xi)$ on $U$ having the desired property for $r-1$ supercommuting degree zero vector fields; we wish to find a coordinate system having the desired property for $X_1, \dotsc, X_r$. By hypothesis, $X_j = \frac {\partial} {\partial x^j} + \sum_{i = 1}^{j-1} a_{ij} \frac {\partial} {\partial x^i}$ for all $j < r$. We have

\begin{align*}
X_r = \sum_{i=1}^p f_i \frac {\partial} {\partial x^i} + \sum_{\rho = 1}^q \varphi_\rho \frac {\partial} {\partial \xi^\rho}
\end{align*}\\

\noindent where $q$ denotes the number of nonzero-degree coordinates. The hypothesis of supercommutativity implies

\begin{align*}
[X_r, X_j] &= \sum_i f_i \bigg[ \frac {\partial} {\partial x^i}, X_j \bigg] + \sum_\rho \varphi_\rho \bigg[ \frac {\partial} {\partial \xi^\rho}, X_j \bigg] - \sum_i X_jf_i \frac {\partial} {\partial x^i} - \sum_\rho X_j \varphi_\rho \frac {\partial} {\partial \xi^\rho} = 0.
\end{align*}\\

Since $\big[ \frac {\partial} {\partial x^i}, X_j \big]$ are a linear combination of the $\frac {\partial} {\partial x^k}$ for $k < r$, we have that $X_j f_i = 0$ for all $i \geq r$. The upper triangular form of the $X_j$, $j < r$, implies that $f_i$ is a function of $(t^r, \dotsc, t^p, \xi^1, \dotsc, \xi^q)$ alone, for $i \geq r$. Furthermore, $\big[ \frac {\partial} {\partial \xi^\rho}, X_j \big] = 0$ for all $\rho$, hence we see that $X_j \varphi_\rho = 0$ for all $\rho$. By the same reasoning as before, we may conclude that $\varphi_\rho$ is a function only of $(t^r, \dotsc, t^p, \xi^1, \dotsc, \xi^q)$, for all $\rho$. We may thus write:

\begin{align*}
X_r = \sum_{i=1}^{r-1} f_i \frac {\partial} {\partial x^i} + \sum_{k=r}^p f_k \frac {\partial} {\partial x^k} + \sum_{\rho = 1}^q \varphi_\rho \frac {\partial} {\partial \xi^\rho}
\end{align*}\\

Let $X'_r := \sum_{k=r+1}^p f_k \frac {\partial} {\partial x^k} + \sum_{\rho = 1}^q \varphi_\rho \frac {\partial} {\partial \xi^\rho}$. Note that $X'_r$ depends only on the coordinates $x^r, \dotsc, x^p, \xi^1, \dotsc, \xi^q$. Since $X_r$ is $\mathcal{O}$-linearly independent of $X_1, \dotsc, X_{r-1}$, the upper triangular form of the $X_j$ for $j < r$ and Prop. \ref{degzerocan} applied to $X'_r$ imply that we may change the coordinates $x^r, \dotsc, x^p, \xi^1, \dotsc, \xi^q$ so that there is a new coordinate system $(x^1, \dotsc, x^{r-1}, x'^r, \dotsc, x'^p, \xi')$ on $U$ (shrinking $U$ if needed) such that 

\begin{align*}
X_r = \frac {\partial} {\partial x'^r} + \sum_{i=1}^{r-1} f_i \frac {\partial} {\partial x^i},
\end{align*}\

\noindent as desired. It is clear that the upper triangular form of the $X_j$, $j \leq r-1$, remains unchanged when the $X_j$ are expressed in the new coordinate system.
\end{proof}

\medskip

The preceding discussion suffices to prove the local Frobenius theorem for involutive distributions of degree zero. For the general case, we begin by showing the existence of a canonical form for a local, nondegenerate vector field of nonzero degree.\\

\begin{prop}\label{nonzerodegcan}
Let $\chi$ be a vector field on $M$ of nonzero degree defined in a neighborhood of $m$, such that $\chi_m \neq 0$. If $\chi$ is odd, suppose in addition that $\chi^2 = 0$. Then there is a coordinate system $(x, \xi)$ about $m$ such that $\chi = \partial / \partial \xi^\sigma$, where the degree of $\xi^\sigma$ is equal to that of $\chi$.
\end{prop}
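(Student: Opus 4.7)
The plan is to adapt the $\mathcal{J}$-adic induction of Lem.~\ref{degzerocan} to a vector field of nonzero degree, with parity-sensitive bookkeeping. Work locally on $\mathbb{R}^{p|\mathbf{q}}$ with coordinates $(z,\zeta)$ centered at $m=0$, and let $\gamma := \deg \chi \neq 0$. Expanding $\chi$ in the coordinate basis and invoking homogeneity, every coefficient of $\chi$ lies in $\mathcal{J}$ except possibly those of the $\partial/\partial \zeta^\rho$ with $\deg \zeta^\rho = \gamma$, which are degree-$0$ functions. The assumption $\chi_m \neq 0$ singles out at least one such $\zeta^\rho$ with nonvanishing constant term at $0$, so after a real linear change among the degree-$\gamma$ coordinates I may assume $\chi = \partial/\partial \zeta^\sigma + R_1$, with all coefficients of $R_1$ lying in $\mathcal{J}$.

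The heart of the argument is an inductive refinement. At stage $k$, assume coordinates $(y_k, \eta_k)$ with $\chi = \partial/\partial \eta^\sigma_k + R_k$ and $R_k$'s coefficients all in $\mathcal{J}^k$, and build $(y_{k+1}, \eta_{k+1})$ by setting $y^i_{k+1} = y^i_k + a^k_i$, $\eta^\rho_{k+1} = \eta^\rho_k + \beta^k_\rho$, with perturbations $a^k_i, \beta^k_\rho \in \mathcal{J}^k$ of appropriate homogeneous degrees. The requirement that $\chi(y^i_{k+1})$, $\chi(\eta^\rho_{k+1})$ (for $\rho \neq \sigma$), and $\chi(\eta^\sigma_{k+1}) - 1$ all lie in $\mathcal{J}^{k+1}$ reduces, using the bound $R_k(\mathcal{J}^k) \subseteq \mathcal{J}^{2k-1} \subseteq \mathcal{J}^{k+1}$ (valid for $k \geq 2$), to the super-PDEs
\begin{align*}
\frac{\partial u}{\partial \eta^\sigma_k} \equiv F \pmod{\mathcal{J}^{k+1}}, \qquad F \in \mathcal{J}^k, \ u \in \mathcal{J}^k.
\end{align*}
The base case $k=1$ does not fit this pattern since $R_1(\mathcal{J}) \subseteq \mathcal{J}$ is not strong enough, and I would handle it separately by an explicit substitution analogous to the matrix correction $\eta = g(z)\zeta$ in the proof of Lem.~\ref{degzerocan}, tailored to kill the linear-in-$\zeta$ part of $R_1$.

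The main obstacle is solvability of these super-PDEs, and this is where the parity of $\chi$ enters. If $\chi$ is even of nonzero degree, then $\eta^\sigma$ is a formal commuting (non-nilpotent) variable, and one can antidifferentiate term-by-term in powers of $\eta^\sigma$; no extra hypothesis on $\chi$ is needed. If $\chi$ is odd, then $(\eta^\sigma)^2 = 0$ and every function splits uniquely as $F = F_0 + F_1 \eta^\sigma$ with $F_0, F_1$ independent of $\eta^\sigma$; an antiderivative exists modulo $\mathcal{J}^{k+1}$ precisely when $\partial F/\partial \eta^\sigma \in \mathcal{J}^k$. This is exactly what the hypothesis $\chi^2 = 0$ provides: expanding $\chi^2 = (\partial/\partial \eta^\sigma_k + R_k)^2$ and evaluating at $y^i_k$ or $\eta^\rho_k$, the leading contribution is $\partial R_k(\cdot)/\partial \eta^\sigma_k$, while the remaining terms lie in $R_k(\mathcal{J}^k) \subseteq \mathcal{J}^{2k-1} \subseteq \mathcal{J}^k$; thus $\chi^2 = 0$ forces the integrability condition we need.

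Finally, the sequences $\{y^i_k\}$ and $\{\eta^\rho_k\}$ are $\mathcal{J}$-adically Cauchy by construction, and Hausdorff completeness of $\mathcal{O}(U)$ yields limits $x^i$ and $\xi^\rho$ of the appropriate degrees, with $\deg \xi^\sigma = \gamma$. Exactly as at the end of Lem.~\ref{degzerocan}, Lem.~\ref{adicconverge} combined with the $\mathbb{Z}_2^n$-super inverse function theorem certifies that $(x,\xi)$ is a bona fide coordinate system, and the identity $\chi = \partial/\partial \xi^\sigma$ then follows from $\chi \equiv \partial/\partial \xi^\sigma \pmod{\mathcal{J}^N}$ for every $N$ together with the $\mathcal{J}$-adic Hausdorff property of $\mathcal{T}M(U)$.
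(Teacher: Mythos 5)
Your plan — work in local coordinates, reduce to a normal form modulo $\mathcal{J}$, then run a $\mathcal{J}$-adic induction and pass to the limit using Hausdorff completeness — is structurally the same as the paper's. But there is a concrete gap in the preliminary reduction, and your odd case takes a genuinely different (and longer) route.

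\textbf{Gap in the preliminary reduction.} You assert that after a \emph{real linear change} among the degree-$\gamma$ coordinates one may assume $\chi = \partial/\partial\zeta^\sigma + R_1$ with all coefficients of $R_1$ in $\mathcal{J}$. This is false as stated. The coefficients $a_\rho$ of $\partial/\partial\zeta^\rho$ with $\deg\zeta^\rho = \gamma$ are degree-$0$ \emph{functions}, not constants: a linear change can normalize their values at $m$ (so $a_\sigma(0)=1$, $a_\rho(0)=0$ for $\rho\neq\sigma$), but their germs remain nonconstant degree-$0$ functions and hence do \emph{not} lie in $\mathcal{J}$. Your subsequent $\mathcal{J}$-adic induction never touches them, so the argument stalls. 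You gesture at ``an explicit substitution analogous to $\eta = g(z)\zeta$'' to handle $k=1$, but you describe its job as killing the \emph{linear-in-$\zeta$ part of $R_1$} — i.e., improving $\mathcal{J}$ to $\mathcal{J}^2$ — which is not the obstruction here. The actual obstruction is absorbing the nonconstant degree-$0$ coefficients, and the paper does this with a genuinely nonlinear, degree-preserving substitution: $\zeta^\sigma = \eta^\sigma a_\sigma(y,0,\widehat\eta)$, $\zeta^\rho = \eta^\rho + \eta^\sigma a_\rho(y,0,\widehat\eta)$, $z^j = y^j + \eta^\sigma\alpha_j(y,0,\widehat\eta)$. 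This is the key new idea compared to Lemma~\ref{degzerocan}, and it is missing from your proposal. Notably, it produces the stronger normal form $\chi = \partial/\partial\eta^\sigma + \eta^\sigma V$, with $V$ of degree zero — not merely $R_1 \in \mathcal{J}$.

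\textbf{Different route in the odd case.} Given the stronger form $\chi = \partial/\partial\eta^\sigma + \eta^\sigma V$, the paper settles the odd case in a single step: since $(\eta^\sigma)^2 = 0$, the relation $\chi^2 = V - \eta^\sigma W = 0$ forces $V = \eta^\sigma W$, hence $\eta^\sigma V = 0$ and $\chi = \partial/\partial\eta^\sigma$ outright — no induction at all. You instead propose to run the full $\mathcal{J}$-adic induction also in the odd case, using $\chi^2 = 0$ to supply the integrability condition $\partial F/\partial\eta^\sigma \in \mathcal{J}^k$ needed to antidifferentiate a nilpotent variable at each stage. Your bounds $R_k(\mathcal{J}^k) \subseteq \mathcal{J}^{2k-1} \subseteq \mathcal{J}^{k+1}$ (for $k\geq 2$) and the derivation of the integrability condition from $\chi^2 = 0$ are correct, so this alternative would work once the preliminary reduction is repaired; it is simply more labor than needed. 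The even case, with formal antidifferentiation in the non-nilpotent variable $\eta^\sigma$ and no extra hypothesis, matches the paper.
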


\begin{proof}
Since the proposition is local, we may assume that $M = U \subseteq \mathbb{R}^{p|{\bf q}}$ with coordinates $(z, \zeta)$ such that $m$ corresponds to $0$, in which case $\mathcal{T}M(U)$ is a free $\mathcal{O}(U)$-module. Then $\chi$ has the form

\begin{align*}
\chi = \sum_j \alpha_j(z, \zeta) \frac {\partial} {\partial z^j} + \sum_\rho a_\rho(z, \zeta) \frac {\partial} {\partial \zeta^\rho}.
\end{align*}\\

Without loss of generality, the condition $\chi_m \neq 0$ may be taken to be $a_\sigma(0, 0) \neq 0$ for some index $\sigma$ of degree $deg(\chi)$. By reordering the coordinates $\zeta^\rho$ such that $deg(\zeta^\rho) = deg(\chi)$, we may assume that $\zeta^\sigma$ is the first coordinate of degree $deg(\chi)$.

Let $0|{\bf 1}_\sigma$ denote the $\Zn$-superdimension which is $0$ in all degrees except $deg(\sigma)$ and $1$ in degree $deg(\sigma)$, and $p|\widehat{\bf q}$ the codimension of $0|{\bf 1}_\sigma$ in $p|{\bf q}$. Let $\eta^\sigma$ be a coordinate on $\mathbb{R}^{0|{\bf 1_\sigma}}$. We define a morphism $\mathbb{R}^{0|{\bf 1}_\sigma} \times U^{p|\widehat{\bf q}} \to U^{p|{\bf q}}$ by

\begin{align*}
&z^j = y^j + \theta^\sigma \alpha_j(y, 0, \widehat{\eta})\\
& \zeta^\sigma= \eta^\sigma a_\sigma(y, 0, \widehat{\eta})\\
& \zeta^\rho= \eta^\rho + \eta^\sigma a_\rho(y, 0, \widehat{\eta}), \hspace{3mm} \rho \neq \sigma,
\end{align*}\\

\noindent where by convention $\alpha_j(y, 0, \widehat{\eta})$ (resp. $a_\rho(y, 0, \widehat{\eta})$) denotes $\alpha_j(y, \eta)|_{\eta^\sigma = 0}$ (resp. $a_\rho(x, \eta)|_{\eta^\sigma = 0}$). Note that these are functions of $y$ and of $\eta^\rho$ for all $\rho \neq \sigma$. The Jacobian matrix of this morphism at 0 has the form:

\begin{align*}
\begin{pmatrix}
I_{p'} & * & 0 \\
0 & a_\sigma(0) & 0\\
0 & * & I_{q'} 
\end{pmatrix}\\
\end{align*}

\noindent which is invertible, hence the morphism  $(y, \eta) \mapsto (z, \zeta)$ may be regarded as a change of coordinates near $0$ by the inverse function theorem. From the chain rule, we have

\begin{align*}
\frac {\partial} {\partial \eta^\sigma} =  &\sum_j \alpha_j(y, 0, \widehat{\eta}) \frac {\partial} {\partial z^j} + \sum_\rho a_\rho(y, 0, \widehat{\eta}) \frac {\partial} {\partial \zeta^\rho}.
\end{align*}\\

We must express the coefficient functions as functions of the new coordinates $z^j, \zeta^\rho$. For this, note that by the Taylor series expansion of $\Zn$-superfunctions,

\begin{align*}
\alpha_j(z, \zeta) &= \alpha_j(y^i + \eta^\sigma \alpha_i, \eta^\sigma a_\sigma, \eta^{\rho \neq \sigma} + \eta^\sigma a_{\rho \neq \sigma})\\
&= \alpha_j(y, 0, \widehat{\eta}) + \eta^\sigma \kappa 
\end{align*}\

\noindent for some function $\kappa$ of degree $0$. Similarly, we find $a_\rho(z, \zeta) = a_\rho(y, 0, \widehat{\eta})+ \eta^\sigma h$ for some function $h$ of degree $deg(\chi)$. Hence we have

\begin{align*}
\chi = \frac {\partial} {\partial \eta^\sigma} + \eta^\sigma V,
\end{align*}\\

\noindent where $V$ is a vector field of degree zero. The proof now splits into two cases according to the parity of $\chi$.\\

\noindent \underline{\bf $\chi$ even.} The argument mirrors that of Prop \ref{degzerocan}. From the preceding discussion, there exists a coordinate system $(y, \eta)$ such that $\chi \equiv \partial/\partial \eta^\sigma \text{ (mod}\, \mathcal{J})$. Now suppose that $k \geq 1$, and that $\chi  \equiv \partial/\partial \eta^\sigma_k \text{ (mod } \mathcal{J}^k)$ in some coordinate system $(y_k, \eta_k)$. We will show that if we choose a new coordinate system $(y_{k+1}, \eta_{k+1})$ given by 

\begin{align*}
(y_k, \eta_k) \to (y_{k+1}, \eta_{k+1}), \hspace{3mm} y^i_{k+1} = y^i_k + a^k_i, \eta^\rho_{k+1} = \eta^\rho_k + \beta^k_\rho
\end{align*}\\

\noindent with $a^k_i, \beta^k_\rho$ suitably chosen, then $\chi \equiv \partial/\partial \eta^\sigma_{k+1} \text{ (mod } \mathcal{J}^{k+1})$. We have

\begin{align} \label{chiexpress}
\chi \equiv  \frac {\partial} {\partial \eta^\sigma_k} + \sum_j \gamma^k_j \, \frac {\partial} {\partial y^j_k} + \sum_\rho g^k_\rho \, \frac {\partial} {\partial \eta^\rho_k}  
\end{align}\\

\noindent where $\gamma^k_j, g^k_\rho \in \mathcal{J}^k$. Under the assumption that $(y_{k+1}, \eta_{k+1})$ is a coordinate system (which will be justified later), the chain rule again yields

\begin{align*}
\frac {\partial} {\partial y^j_k} = \frac {\partial} {\partial y^j_{k+1}} + \sum_l \bigg(\frac {\partial a^k_l} {\partial y^j_k} \bigg) \frac {\partial} {\partial y^l_{k+1}} + \sum_\tau \bigg(\frac {\partial \beta^k_\tau} {\partial y^j_k} \bigg) \frac {\partial} {\partial \eta^\tau_{k+1}}
\end{align*}\\

\noindent and

\begin{align*}
\frac {\partial} {\partial \eta^\rho_k} = \frac {\partial} {\partial \eta^\rho_{k+1}} + \sum_l \bigg(\frac {\partial a^k_l} {\partial \eta^\rho_k} \bigg) \frac {\partial} {\partial y^l_{k+1}} + \sum_\tau \bigg(\frac {\partial \beta^k_\tau} {\partial \eta^\rho_k} \bigg) \frac {\partial} {\partial \eta^\tau_{k+1}}.
\end{align*}\\

Substituting, we have the following expression for $\chi$:

\begin{align*}
\chi = \frac {\partial} {\partial \eta^\sigma_{k+1}} + \sum_j \bigg( \gamma^k_j + \frac {\partial a^k_j} {\partial \eta^\sigma_k} \bigg) \frac {\partial} {\partial y^j_{k+1}} + \sum_\rho \bigg(g^k_\rho + \frac {\partial \beta^k_\rho} {\partial \eta^\sigma_k} \bigg) \frac {\partial} {\partial \eta^\rho_{k+1}}  + Z,
\end{align*}\\

\noindent where $Z$ denotes the remaining terms (these will be shown to be zero$\text{ mod } \mathcal{J}^{k+1}$ once $a^k_j, \beta^k_\rho$ are suitably chosen). We now choose $a^k_j, \beta^k_\rho$ such that:

\begin{align*}
&\frac {\partial a^k_j} {\partial \eta^\sigma_k} = -\gamma^k_j \text{  and } \beta^k_j(y, 0, \widehat{\eta}) =  0;\\
&\frac {\partial \beta^k_\rho} {\partial \eta^\sigma_k} = -g^k_\rho \text{  and } a^k_\rho(y, 0, \widehat{\eta}) = 0,\\
\end{align*}

\noindent where the notational convention regarding the meaning of $\beta^k_j(y, 0, \widehat{\theta}), a^k_\rho(y, 0, \widehat{\theta})$ continues to hold. Note that the latter conditions ensure that $a^k_j$ and $\beta^k_\rho$ lie in $\mathcal{J}^{k+1}$. Hence one sees that for such a choice of $a^k_j$ and $\beta^k_\rho$, $Z \equiv 0 \text{ (mod } \mathcal{J}^{k+1})$ and so $\chi \equiv \partial / \partial \theta^\sigma_{k+1} \text{ (mod } \mathcal{J}^{k+1})$ as well. Choosing $a^k_j$ and $\beta^k_\rho$ satisfying these conditions is clearly possible if the $\gamma^k_j$ (resp. $g^k_\rho$) are polynomial in $\theta_k^\sigma$, and by Hausdorff completeness of $\mathcal{O}$ and $\mathcal{J}$-adic continuity of vector fields it is possible for arbitrary $\gamma^k_j$ (resp. $g^k_\rho$). The Jacobian matrix of the transformation $(y_k, \eta_k) \mapsto (y_{k+1}, \eta_{k+1})$ at $0$ is clearly the identity and so $(y_{k+1}, \eta_{k+1})$ so defined is a coordinate system by the inverse function theorem, as previously asserted.

As in Prop. \ref{degzerocan}, we obtain an infinite sequence of coordinate systems $(y_k, \eta_k)$ such that $\chi \equiv \partial/\partial \eta^1_k \, (\text{mod } \mathcal{J}^k)$. For any $k$, $\{y^i_k\}$ and $\{\eta^\rho_k\}$ are $\mathcal{J}$-adically Cauchy sequences by construction, hence converge to unique limits $x^i, \xi^\rho$ respectively.  The Jacobian matrix of the morphism $(y_1, \eta_1) \to (y_k, \eta_k)$ at $0$ is the identity for any $k$ by construction of $y_k, \eta_k$, so the Jacobian matrix of the morphism $(y_1, \eta_1) \to (x, \xi)$ at $0$ is also the identity and $(x, \xi)$ is a coordinate system near $0$.

Finally, let $N$ be any positive integer. There exists some $M$ such that $h^k_l := y^l_k - x^l, \omega^k_\tau := \eta^\tau_k - \xi^\tau \in \mathcal{J}^{N+1}$ for all $k \geq M$. By the chain rule, we have 

\begin{align*}
\frac {\partial} {\partial \eta^\sigma_k} = \frac {\partial} {\partial \xi^\sigma} + \sum_l \bigg(\frac {\partial h^k_l} {\partial \eta^\sigma_k}\bigg) \frac {\partial} {\partial x^l} + \sum_\tau \bigg(\frac {\partial \omega^k_l} {\partial \eta^\sigma_k}\bigg) \frac {\partial} {\partial \xi^\tau},
\end{align*}\\

\noindent which implies $\partial/\partial \eta^\sigma_k \equiv \partial/\partial \xi^\sigma \, (\text{mod} \, \mathcal{J}^N)$ for all $k \geq M$. Let $N'$ be such that $N' \geq max(M, N)$. Then $\chi \equiv \partial/\partial \eta^\sigma_{N'} \, (\text{mod} \, \mathcal{J}^{N'})$ by the definition of the coordinate systems $(x_k, \theta_k)$ and $\partial/\partial \eta^\sigma_{N'} \equiv \partial/\partial \xi^\sigma \, (\text{mod} \, \mathcal{J}^N)$ from the previous remark. Hence $\chi \equiv \partial/\partial \xi^\sigma \, (\text{mod} \, \mathcal{J}^N)$. But $N$ was arbitrary, hence $\chi = \partial/\partial \xi^\sigma$ since $\mathcal{T}M(U)$ is $\mathcal{J}(U)$-adically Hausdorff.\\

\noindent \underline{\bf $\chi$ odd.} In this case $(\eta^\sigma)^2 = 0$, and so we may conclude as in the $\mathbb{Z}_2$-graded case by using the hypothesis that $\chi^2 = 0$. Namely, the equation $\chi =\partial / \partial \eta^\sigma + \eta^\sigma V$ then implies

\begin{align*}
\chi^2 &= V-\eta^\sigma W = 0
\end{align*}\

\noindent for some vector field $W$ of degree $\eta^\sigma$, which implies $\chi = \partial / \partial \eta^\sigma$.
\end{proof}

\medskip

Now, we may finally prove the more difficult half of the local Frobenius theorem:\\

\begin{thm}
Let $\scD$ be an involutive distribution of rank $r|{\bf s}$ on a $\Zn$-supermanifold $M$. Then for any point $m$ of $M$, there exist local coordinates $(x, \xi)$ about $m$ such that 

\begin{align*}
\frac {\partial} {\partial x^1}, \dotsc, \frac {\partial} {\partial x^r}, \frac {\partial} {\partial \xi^1}, \dotsc, \frac {\partial} {\partial \xi^s}
\end{align*}\

\noindent form a local $\mathcal{O}$-module basis of $\scD$. 
\end{thm}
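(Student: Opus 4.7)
\medskip

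The plan is to assemble the preceding results: use Proposition \ref{commutingvf} to obtain a supercommuting local basis of $\scD$, apply Lemma \ref{degzerouptri} to bring its degree-zero members into upper-triangular form, and then iterate the same triangular-cascading technique for the nonzero-degree members, using Proposition \ref{nonzerodegcan} in place of Lemma \ref{degzerocan}.

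First I would invoke Proposition \ref{commutingvf} near $m$ to produce a supercommuting basis $\{X_1,\dotsc,X_r,\chi_1,\dotsc,\chi_s\}$ of $\scD$, with each $X_i$ of degree zero and each $\chi_\sigma$ of nonzero (possibly differing) degree. Supercommutativity gives in particular $[\chi_\sigma,\chi_\sigma]=0$; when $\chi_\sigma$ is odd this forces $2\chi_\sigma^2=0$ and hence $\chi_\sigma^2=0$, which is precisely the hypothesis required to apply Proposition \ref{nonzerodegcan}. Next I would apply Lemma \ref{degzerouptri} to the degree-zero family to obtain coordinates $(x,\xi)$ near $m$ in which $X_j=\partial/\partial x^j+\sum_{i<j}a_{ij}\partial/\partial x^i$ for $j\le r$. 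Triangularity then shows that $\partial/\partial x^j\in\scD$ for each $j\le r$, so $\scD$ is generated locally by $\{\partial/\partial x^1,\dotsc,\partial/\partial x^r,\chi_1,\dotsc,\chi_s\}$.

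The remaining work is to straighten the $\chi_\sigma$'s to coordinate vector fields by an induction on $\sigma=1,\dotsc,s$ that mimics the proof of Lemma \ref{degzerouptri} with Proposition \ref{nonzerodegcan} substituted for Lemma \ref{degzerocan}. Suppose inductively that after a preceding sequence of coordinate changes, $\chi_1,\dotsc,\chi_{\sigma-1}$ have already been placed into an analogous upper-triangular form involving only the coordinates already normalized. Expanding $[\chi_\sigma,X_j]=0$ for $j\le r$ and $[\chi_\sigma,\chi_\tau]=0$ for $\tau<\sigma$, and using the upper-triangular expressions for the $X_j$ and the $\chi_\tau$, the same cascading argument as in Lemma \ref{degzerouptri} forces the coefficients of $\chi_\sigma$ along the remaining free coordinate vector fields to depend only on the remaining free coordinates. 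Extracting that transverse part produces a vector field $\chi_\sigma'$ on a sub-supermanifold of smaller dimension, of the same nonzero degree as $\chi_\sigma$, and, if odd, still satisfying $(\chi_\sigma')^2=0$ because $\chi_\sigma^2=0$. Proposition \ref{nonzerodegcan} then straightens $\chi_\sigma'$ to $\partial/\partial\xi^\sigma$ of matching nonzero degree by a coordinate change affecting only the free coordinates; such a change preserves both the upper-triangular form of the $X_j$'s and that of the $\chi_\tau$'s with $\tau<\sigma$, closing the induction.

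After $s$ iterations the $\mathcal{O}$-span of the basis coincides, in the final coordinate system, with the span of $\partial/\partial x^1,\dotsc,\partial/\partial x^r,\partial/\partial\xi^1,\dotsc,\partial/\partial\xi^s$, which is the desired conclusion. The main obstacle I expect is the bookkeeping in this iterated straightening: verifying that the transverse vector field $\chi_\sigma'$ extracted at each stage genuinely descends to a sub-supermanifold (so that Proposition \ref{nonzerodegcan} applies in the correct codimension), that the supercommutation identities survive each coordinate change, and that the vanishing $\chi_\sigma^2=0$ is inherited by the reduction $\chi_\sigma'$. Once these compatibility points are checked, the triangular cascade of Lemma \ref{degzerouptri} generalizes without essential modification to the mixed degree-zero / nonzero-degree setting, and the theorem follows.
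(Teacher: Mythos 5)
Your proposal follows essentially the same line as the paper's proof: a supercommuting basis from Proposition~\ref{commutingvf}, straightening the degree-zero part via Lemma~\ref{degzerouptri} to get $\partial/\partial x^1,\dotsc,\partial/\partial x^r$ spanning the degree-zero slice, then an induction on $\sigma$ in which supercommutativity forces the coefficients of $\chi_\sigma$ to depend only on the remaining free coordinates and Proposition~\ref{nonzerodegcan} straightens it to $\partial/\partial\xi^\sigma$, with the odd case handled via $[\chi_\sigma,\chi_\sigma]=0\Rightarrow\chi_\sigma^2=0$. The compatibility checks you flag as remaining bookkeeping are exactly the steps the paper also leaves implicit, so the argument is correct and structurally identical.
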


\begin{proof}
Let $\{X_1, \dotsc, X_r, \chi_1, \dotsc, \chi_s\}$ be a local basis of $\scD$. By Prop. \ref{commutingvf}, we may assume that these vector fields $\Zn$-supercommute. Hence, $\scD' := \text{span} \{X_1, \dotsc, X_r\}$ is an integral subdistribution of rank $r|0$ and by Lem. \ref{degzerouptri}, there exist local coordinates such that $X_i = \frac {\partial} {\partial x^i}$. 

Consider the representation of $\chi_1$ in terms of the coordinate vector fields:

\begin{align*}
\chi_1 = \sum_j \beta^j_1 \frac {\partial} {\partial x^j} + \sum_\rho b^\rho_1 \frac {\partial} {\partial \xi^\rho}.
\end{align*}\

Then from the fact that $[X_i, \chi_1] = 0$ for all $i$, we conclude that the coefficients $\beta^j_1, b^\rho_1$ depend only on $x^{r+1}, \dotsc, x^p, \xi^1, \dotsc, \xi^q$. The representation of $\chi_1$ may contain multiples of the $\frac {\partial} {\partial x^j}$, $j = 1, \dotsc, r$. However, we may eliminate these terms by subtracting off suitable linear combinations of $X_1, \dotsc, X_r$. 

We may therefore assume that $\chi_1$ depends only on the coordinates $x^{r+1}, \dotsc, x^p, \xi^1, \dotsc, \xi^q$. We note that if $\chi_1$ is odd, $\chi_1^2 = 0$ by the supercommutativity of the vector fields. Hence we may change the coordinates $x^{r+1}, \dotsc, x^p, \xi^1, \dotsc, \xi^q$ by Lem. \ref{nonzerodegcan} in such a way that $\chi_1 = \frac {\partial} {\partial \xi^1}$.

For $\chi_2$, we apply the same idea: the fact that $[\chi_2, X_i] = 0$ and $[\chi_2, \chi_1] = 0$ implies that $\chi_2$ depends only on $x^{r+1}, \dotsc, x^p, \xi^2, \dotsc, \xi^q$ and we may again apply Lem. \ref{nonzerodegcan} to find a new coordinate system in which $\chi_2 = \frac {\partial} {\partial \xi^2}$. We can clearly apply this argument successively to $\chi_3, \dotsc, \chi_s$ to conclude the proof.
\end{proof}

\bigskip

\section{Submanifolds}

The following definitions are straightforward extensions of the $\Zs$-case. As in the ungraded case, one must distinguish between immersed and embedded submanifolds. Note that we omit prefixes such as ``$\Zn$-super-," etc. in order to make the terminology less cumbersome.

\begin{defn}
Let $M$ be a $\Zn$-supermanifold. An {\it immersed submanifold} of $M$ is a pair $(N, j)$ where $N$ is a $\Zn$-supermanifold and $j: N \to M$ is an injective immersion; that is, $|j|: |N| \to |M|$ is injective and $(dj)_m$ is injective at each point $m$ of $M$. An {\it embedding} is a morphism $j: N \to M$ such that $j$ is an immersion and $|j|$ is a homeomorphism of the topological space $|N|$ onto its image in $|M|$.

A pair $(N, j)$ is an {\it embedded submanifold} of $M$ if $j$ is an embedding. $(N, j)$ is a {\it closed embedded submanifold} if $j$ is an embedding and $|j|(|N|)$ is a closed subset of $|M|$.
\end{defn}

We will just need the following simple property of immersions for our proof of the global Frobenius theorem.

\begin{prop}
Let $j: M \to N$ be an immersion. Then $j$ is locally an embedding on $M$: for any point $m \in |M|$, there is an open neighborhood $U \ni m$ such that $j|_U$ is an embedding. 
\end{prop}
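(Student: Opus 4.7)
The plan is to put $j$ into a standard ``slice'' form in local coordinates, from which the embedding property is immediate. Fix $m \in |M|$ and choose coordinates $(y^a, \eta^\beta)$ on $N$ centered at $j(m)$, where $y^a$ are the degree-zero coordinates and $\eta^\beta$ the nonzero-degree ones. Since $(dj)_m$ is injective and preserves the $\Zn$-grading, we may select, working degree by degree, a subset of graded cardinality $\dim M = p|\mathbf{q}$ whose pullbacks $j^*(y^a), j^*(\eta^\beta)$ have differentials at $m$ forming a homogeneous basis of $T_m^*M$. After relabeling, we take this subset to be $(y^1, \dotsc, y^p, \eta^1, \dotsc, \eta^q)$, writing $q$ collectively for the nonzero-degree indices of $\mathbf{q}$. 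By the $\Zn$-super inverse function theorem of \cite{CKP}, the pullbacks $x^i := j^*(y^i)$ and $\xi^\alpha := j^*(\eta^\alpha)$ then form a coordinate system $(x, \xi)$ on some open neighborhood $U$ of $m$.

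In these coordinates, the remaining pullbacks may be written as $j^*(y^a) = f^a(x, \xi)$ for $a > p$ and $j^*(\eta^\beta) = \varphi^\beta(x, \xi)$ for $\beta > q$, where $f^a$ and $\varphi^\beta$ are functions of the appropriate degree. We modify the coordinates on $N$ by setting
\begin{align*}
\tilde y^a := y^a - f^a(y^1, \dotsc, y^p, \eta^1, \dotsc, \eta^q), \qquad \tilde \eta^\beta := \eta^\beta - \varphi^\beta(y^1, \dotsc, y^p, \eta^1, \dotsc, \eta^q),
\end{align*}
while leaving $(y^1, \dotsc, y^p, \eta^1, \dotsc, \eta^q)$ unchanged. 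The Jacobian of this change at $j(m)$ is block-triangular with identity blocks along the diagonal, so by the $\Zn$-super inverse function theorem it yields a new coordinate system on some open neighborhood $V$ of $j(m)$. By construction $j^*(\tilde y^a) = 0$ and $j^*(\tilde \eta^\beta) = 0$, so in these coordinates $j|_U$ is the standard inclusion of $\R^{p|\mathbf{q}}$ as the coordinate slice $\{\tilde y^a = 0, \tilde \eta^\beta = 0\}$ of $\R^{p'|\mathbf{q'}}$.

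Shrinking $U$ if necessary so that $|j|(U) \subseteq |V|$, the underlying map $|j||_U$ is then the classical inclusion of an open subset of $\R^p$ onto the closed subset $\{\tilde y^a = 0\}_{a > p}$ of $|V|$, which is a homeomorphism onto its image. Hence $j|_U$ is an embedding. The main subtlety is the degree-by-degree selection of the coordinates $y^1, \dotsc, y^p, \eta^1, \dotsc, \eta^q$ in the first step: this is legitimate precisely because $j^*$ preserves the $\Zn$-grading, which ensures that the selected pullbacks yield a homogeneous basis of $T_m^*M$ and that the $\Zn$-super inverse function theorem applies.
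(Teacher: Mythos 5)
Your proof is correct, and it supplies a self-contained argument for what the paper simply cites: the normal form (rank) theorem for immersions, attributed to the ``discussion of local forms of morphisms'' in \cite{CKP}. The paper's proof is a one-line citation; you reprove the needed local form from scratch. Your route is the standard one adapted to the $\Zn$-graded setting: (i) use injectivity and grading-preservation of $(dj)_m$ to pick, degree by degree, a subcollection of target coordinate functions whose pullbacks have linearly independent homogeneous differentials at $m$, (ii) apply the $\Zn$-super inverse function theorem to make these pullbacks a coordinate system $(x,\xi)$ on $M$, and (iii) straighten the remaining pullbacks to zero by a unipotent coordinate change on $N$, again legitimized by the inverse function theorem. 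You correctly emphasize the one genuinely $\Zn$-specific point, namely that the coordinate selection must respect the $\Zn$-grading of $T^*_m M$, which is guaranteed because $j^*$ is a graded morphism. The only imprecision is cosmetic: the image of $|j||_U$ is an open subset of the coordinate slice $\{\tilde y^a = 0\}$, not the whole (closed) slice, but it is still locally closed in $|V|$, and $|j||_U$ is a homeomorphism onto it (with continuous inverse given by coordinate projection), so the conclusion that $j|_U$ is an embedding stands. Net effect: you trade the paper's brevity for a self-contained proof that does not lean on the still-in-preparation reference \cite{CKP}, which is arguably a gain.
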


\begin{proof}
This is a consequence of the discussion of local forms of morphisms in \cite{CKP}: since $j$ is an immersion, at $m \in |M|$ there exist charts $U$ around $m$ and $V$ around $j(m)$ such that $j$ is a linear embedding.
\end{proof}

\bigskip

\section{Global $\Zn$-super Frobenius theorem}\label{globfrob}

We now turn our attention to the global version of the $\Zn$-super Frobenius theorem.\\

\begin{defn}
Let $\scD$ be a distribution on $M$. An {\it integral submanifold} of $M$ is an immersed submanifold $j: N \to M$ such that for each $n \in N$, $(dj_n)(T_nN) = D_{j(n)}$. An integral submanifold $j: N \to M$ is {\it maximal} if $|N|$ is connected and $N$ contains every other connected integral submanifold of $M$ which has a point in common with it.
\end{defn}

\medskip

We will say is a coordinate system $(x, \xi)$ on a domain $U$ is {\it adapted} to $\scD$ if $\{\partial/\partial x^i, \partial/\partial \xi^j\}$, $1 \leq i \leq r, 1 \leq j \leq s$ is a basis for $\scD(U)$ as $\cO(U)$-module.

\bigskip

The following lemma will allow us to construct maximal integral submanifolds.\\

\begin{lem}\label{maxintmf}
Let $\{N_i\}_{i \in I}$ be any nonempty collection of connected integral submanifolds of $\scD$ passing through a point $p$. Then there is a unique structure of $\Zn$-supermanifold $N$ on $|N| := \cup_i |N_i|$ such that $N$ is an connected integral submanifold of $\scD$ for which the inclusion $N_i \to N$ is an open embedding for each $i$.
\end{lem}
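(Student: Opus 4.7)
The plan is to mimic the classical construction of a maximal integral leaf by gluing along plaques of $\scD$-adapted charts. The two key inputs are the Local Frobenius Theorem (Thm.~\ref{localFrob}), which supplies around any point of $|M|$ an adapted coordinate chart $(U;x^1,\dotsc,x^p,\xi^1,\dotsc,\xi^q)$ in which $\scD|_U$ is spanned by $\partial/\partial x^1,\dotsc,\partial/\partial x^r,\partial/\partial\xi^1,\dotsc,\partial/\partial\xi^s$, and the preceding proposition, which ensures that an immersion is locally an embedding. In such an adapted chart I call the \emph{plaque through $q\in|U|$} the connected component of $q$ in the topological slice $\{x^{r+1}=x^{r+1}(q),\dotsc,x^p=x^p(q)\}$, equipped with the $\Zn$-supermanifold structure in which $x^1,\dotsc,x^r,\xi^1,\dotsc,\xi^s$ are coordinates.

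I would then establish two compatibility facts. First, on the overlap of two adapted charts $(x,\xi)\leftrightarrow(y,\eta)$, the intrinsic characterisation of $\scD$ forces the transverse coordinates $y^{r+1},\dotsc,y^p,\eta^{s+1},\dotsc,\eta^q$ to be functions of $x^{r+1},\dotsc,x^p,\xi^{s+1},\dotsc,\xi^q$ alone; hence plaques of one chart decompose into plaques of the other with matching supermanifold structure. Second, any connected integral submanifold $j':N'\to M$ of $\scD$ meets an adapted chart $U$ in a (countable) disjoint union of open subsets of plaques of $U$: the local-embedding property of $j'$ together with the Local Frobenius Theorem supplies, around each point of $|j'|^{-1}(|U|)$, an isomorphism onto an open sub-supermanifold of the plaque through its image.

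With these facts in hand, I topologise $|N|:=\bigcup_i |j_i|(|N_i|)$ by declaring $W\subseteq|N|$ open iff $|j_i|^{-1}(W)$ is open in $|N_i|$ for every $i$; this is the coarsest topology making each $|j_i|$ a homeomorphism onto an open subset, and $|N|$ is connected since every $N_i$ is connected and contains $p$. The structure sheaf $\cO_N$ is defined locally by transferring the plaque supermanifold structure of an ambient adapted chart of $M$ through the relevant $|j_i|$. The first compatibility fact guarantees well-definedness across different adapted charts of $M$, and the second guarantees agreement with the given $\cO_{N_i}$; together they show that the local structures glue to a $\Zn$-supermanifold structure on $|N|$, that each $N_i\hookrightarrow N$ is an open embedding, and that $N$ is an integral submanifold of $\scD$ (on each plaque the differential maps onto the fiber of $\scD$).

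Uniqueness is then formal: for any other structure $N'$ with the stated properties the identity of the underlying set is a homeomorphism (both topologies are characterised by the family $\{|j_i|\}$ being open embeddings) and a sheaf isomorphism (on each $|j_i|(|N_i|)$ both structure sheaves coincide with $(|j_i|)_*\cO_{N_i}$). I expect the main obstacle to be the structure-sheaf compatibility across different $N_i$'s that meet the same plaque: this requires knowing that the plaque supermanifold structure is canonically determined by $\scD$ and that any integral submanifold locally realises exactly this structure, which in turn rests on the uniqueness parts of the normal forms derived in Section~\ref{locfrob}. Everything else reduces to standard bookkeeping.
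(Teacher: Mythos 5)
Your proposal follows essentially the same route as the paper: topologize $|N|$ by the coherent topology (open iff all preimages under the $|j_i|$ are open), equip it with a $\Zn$-supermanifold atlas of plaques/slices coming from $\scD$-adapted charts supplied by the Local Frobenius Theorem, reduce the compatibility of transition maps to the classical ungraded statement about slices of a foliation applied to the reduced distribution $\widetilde{\scD}$ on $|M|$, and obtain uniqueness from the fact that immersions are local embeddings. One small slip worth fixing: the topology you define is the \emph{finest} (final) topology making each $|j_i|$ continuous, not the coarsest; that it is also an open embedding is a separate fact, which follows as in the classical case from the plaque compatibility (and one should also record, as the paper does, that $|N|$ is Hausdorff and second-countable by the same classical arguments).
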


\begin{proof}
We define a topology on $\cup_i |N_i|$ by declaring a set $U \subset \cup_{i \in I} |N_i|$ to be open if and only if $U \cap |N_i|$ is open in $|N_i|$ for all $i$. Noting that the $|N_i|$ are integral submanifolds for the reduced distribution $\widetilde{\scD}$ on $|M|$, it follows that each $|N_i|$ is an open subset of $|N|$ and that $|N|$ is Hausdorff and second-countable via the same arguments as in the classical case. 

It remains to endow $|N|$ with the structure of a $\Zn$-supermanifold. For this, let us take the $\Zn$-supermanifold atlas defined by all charts $(T \cap N, \varphi)$, where $T$ is a single slice in an adapted chart $U$ for $\scD$, and $\varphi: T \to \R^{r|{\bf s}}$ is the morphism which is given in the adapted coordinates by linear projection: $\varphi(x^1, \dotsc, x^r, \dotsc, x^p, \xi^1, \dotsc, \xi^s, \dotsc, \xi^q) = (x^1, \dotsc, x^r, \xi^1, \dotsc, \xi^s)$. (Such a chart exists around any point of $N$ by the local Frobenius theorem \ref{localFrob}). Any slice $T$ is a closed embedded submanifold of $U$.

Let $T, T'$ be two such slices in adapted charts $U, U'$. Then $V := U \cap U'$ is also an adapted chart, and $T \cap V, T' \cap V$ are embedded integral submanifolds of $\scD$ in $V$. Suppose that $|T \cap N|$ and $|T' \cap N|$ intersect. Then a standard lemma on the local structure of integral manifolds for a distribution on an ungraded manifold, applied to $\widetilde{\scD}$, implies $|T \cap V|, |T' \cap V|$ are contained in a single slice $\widetilde{S}$ for $\widetilde{\scD}$ inside $|V|$, and since all integral submanifolds for $\widetilde{\scD}$ are equidimensional, the inclusions $|T \cap V| \to \widetilde{S}, |T' \cap V| \to \widetilde{S}$ are open embeddings, whence $T \cap T'$ is an open subsupermanifold of $T$ and $T'$.

As $T, T'$ are embedded, the transition maps $\varphi' \circ \varphi^{-1}$ are isomorphisms, showing that the atlas defines a $\Zn$-supermanifold structure on $|N|$. This implies that the inclusion $j: N \to M$ is an immersion (since $j$ is locally an embedding). Furthermore $dj_n(T_nN) =  D_{j(n)}$ at any point $n$, since this is true on any slice in an adapted chart around $n$, so that $N$ is an integral submanifold. Suppose $N'$ is another $\Zn$-supermanifold structure on $|N|$ such that $N'$ is an integral submanifold. Then $i': N' \to M$ is an immersion, so locally an embedding. Hence the $\Zn$-supermanifold structure on $N'$ must match that of the slices on sufficiently small open subsets of $|N|$. This proves the desired uniqueness. 
\end{proof}

\bigskip

\begin{thm}[Global Frobenius theorem for $\Zn$-supermanifolds]
Let $M$ be a $\Zn$-supermanifold and $\scD$ an involutive distribution on $M$. Then given any point $m$ of $M$, there exists a unique maximal integral subsupermanifold of $\scD$ passing through $m$.
\end{thm}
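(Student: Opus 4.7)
The plan is to realize the maximal integral submanifold through $m$ as the union of all connected integral submanifolds of $\scD$ passing through $m$, equipped with the $\Zn$-supermanifold structure produced by Lemma \ref{maxintmf}. The local Frobenius theorem (Theorem \ref{localFrob}) guarantees the existence of at least one such integral submanifold: in an adapted coordinate chart $(x,\xi)$ centered at $m$, the slice cut out by $x^{r+1} = \dotsb = x^p = 0$ and $\xi^{s+1} = \dotsb = \xi^q = 0$ is a connected embedded integral submanifold through $m$, so the family of candidates is nonempty.

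First I would let $\{N_i\}_{i \in I}$ be the family of all connected integral submanifolds of $\scD$ passing through $m$ and apply Lemma \ref{maxintmf} to produce a single connected integral submanifold $N$ with $|N| = \bigcup_i |N_i|$ into which each $N_i$ embeds as an open subsupermanifold. In particular $m \in |N|$ and $N$ is connected by construction.

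Next, for maximality, suppose $j'\colon N' \to M$ is a connected integral submanifold with $|j'(N')| \cap |N| \neq \emptyset$, and pick $q$ in this intersection. Applying Lemma \ref{maxintmf} to the pair $\{N, N'\}$, both of which pass through $q$, yields a connected integral submanifold $\tilde N$ containing each of $N$ and $N'$ as open subsupermanifolds. Since $m \in |N| \subseteq |\tilde N|$ and $\tilde N$ is connected, $\tilde N$ itself belongs to the family $\{N_i\}_{i \in I}$, so the inclusion $\tilde N \hookrightarrow N$ is an open embedding. Composing, $N' \hookrightarrow \tilde N \hookrightarrow N$ exhibits $N'$ as an open subsupermanifold of $N$, as required.

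Finally, for uniqueness, if $N_1$ and $N_2$ are two maximal integral submanifolds of $\scD$ through $m$, maximality applied in each direction supplies open subsupermanifold inclusions $N_1 \hookrightarrow N_2$ and $N_2 \hookrightarrow N_1$ whose underlying continuous maps are mutually inverse bijections, and the uniqueness clause of Lemma \ref{maxintmf} (applied to the common underlying set) forces the two $\Zn$-supermanifold structures to coincide. The step I expect to require the most care is the application of Lemma \ref{maxintmf} to $\{N, N'\}$ at the point $q$: one must confirm that $N$ and $N'$ genuinely qualify as connected integral submanifolds through $q$ in the sense needed, and that the connectedness and integral-submanifold property of the output $\tilde N$ hold at every point and not merely at $q$. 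Both, however, are built into the conclusion of that lemma, so the argument goes through.
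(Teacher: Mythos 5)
Your proof is correct and follows essentially the same route as the paper's: define $N$ as the union of all connected integral submanifolds of $\scD$ through $m$, endow $|N|$ with a $\Zn$-supermanifold structure via Lemma \ref{maxintmf}, then check maximality and uniqueness. You are in fact slightly more careful than the paper's proof on the maximality step: the definition of maximality requires $N$ to contain every connected integral submanifold $N'$ having \emph{any} point in common with $N$, not only those passing through $m$, and your argument applying Lemma \ref{maxintmf} to $\{N, N'\}$ at a common point $q$ handles precisely this case, which the paper dispenses with as ``clearly maximal.''
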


\begin{proof}
By the local Frobenius theorem, the collection $\{N_i\}$ of all connected integral submanifolds of $\scD$ passing through $m$ is nonempty. By Lem. \ref{maxintmf}, $\{N_i\}$ determines a unique connected integral submanifold $N_m$ passing through $m$. $N_m$ is clearly maximal since any other connected integral submanifold passing through $m$ belongs to $\{N_i\}$, so must be contained in $N_m$. Uniqueness follows for the same reason: let $N'_m$ be another maximal integral submanifold through $m$. Then $N'_m$ belongs to $\{N_i\}$ and hence is contained in $N_m$. But then $N'_m = N_m$ by maximality.
\end{proof}

\bigskip

\end{document}